\newtheorem{theorem}{Theorem}
\newtheorem{corollary}[theorem]{Corollary}
\newtheorem{question}[theorem]{Question}
\newtheorem{observation}[theorem]{Observation}
\newtheorem{definition}[theorem]{Definition}
\newcommand{\QED}{\end{proof}}
\def\BF#1.{{\bf #1.}}
\newcommand{\url}[1]{{\tt #1}}
\newcommand{\cal}{\mathcal}
\renewcommand{\P}{{\mathbb P}}
\newcommand{\Q}{{\mathbb Q}}
\newcommand{\Z}{{\mathbb Z}}
\newcommand{\R}{{\mathbb R}}
\newcommand{\calM}{{\mathcal M}}
\newcommand{\Ptail}{{\P_{\!\scriptscriptstyle\rm tail}}}
\newcommand{\one}{\mathop{1\hskip-3pt {\rm l}}}
\newcommand{\of}{\subseteq}
\newcommand{\set}[1]{\{\,{#1}\,\}}
\newcommand{\elesub}{\prec}
\newcommand{\ran}{\mathop{\rm ran}}
\newcommand{\Add}{\mathop{\rm Add}}
\newcommand{\Coll}{\mathop{\rm Coll}}
\newcommand{\plus}{{+}}
\newcommand{\plusplus}{{{+}{+}}}
\newcommand{\satisfies}{\models}
\newcommand{\forces}{\Vdash}
\newcommand{\concat}{\mathbin{{}^\smallfrown}}
\newcommand{\Union}{\bigcup}
\newcommand{\intersect}{\cap}
\newcommand{\smalllt}{\mathrel{\mathchoice{\raise2pt\hbox{$\scriptstyle<$}}{\raise1pt\hbox{$\scriptstyle<$}}{\raise0pt\hbox{$\scriptscriptstyle<$}}{\scriptscriptstyle<}}}
\newcommand{\smallleq}{\mathrel{\mathchoice{\raise2pt\hbox{$\scriptstyle\leq$}}{\raise1pt\hbox{$\scriptstyle\leq$}}{\raise1pt\hbox{$\scriptscriptstyle\leq$}}{\scriptscriptstyle\leq}}}
\newcommand{\lt}{\smalllt}
\newcommand{\ltkappa}{{{\smalllt}\kappa}}
\newcommand{\ltgamma}{{{\smalllt}\gamma}}
\newcommand{\boolval}[1]{\mathopen{\lbrack\!\lbrack}\,#1\,\mathclose{\rbrack\!\rbrack}}
\def\[#1]{\boolval{#1}}
\newcommand{\UnderTilde}[1]{{\setbox1=\hbox{$#1$}\baselineskip=0pt\vtop{\hbox{$#1$}\hbox to\wd1{\hfil$\sim$\hfil}}}{}}
\newcommand{\Undertilde}[1]{{\setbox1=\hbox{$#1$}\baselineskip=0pt\vtop{\hbox{$#1$}\hbox to\wd1{\hfil$\scriptstyle\sim$\hfil}}}{}}
\newcommand{\undertilde}[1]{{\setbox1=\hbox{$#1$}\baselineskip=0pt\vtop{\hbox{$#1$}\hbox to\wd1{\hfil$\scriptscriptstyle\sim$\hfil}}}{}}
\newcommand{\UnderdTilde}[1]{{\setbox1=\hbox{$#1$}\baselineskip=0pt\vtop{\hbox{$#1$}\hbox to\wd1{\hfil$\approx$\hfil}}}{}}
\newcommand{\Underdtilde}[1]{{\setbox1=\hbox{$#1$}\baselineskip=0pt\vtop{\hbox{$#1$}\hbox to\wd1{\hfil\scriptsize$\approx$\hfil}}}{}}
\newcommand{\st}{\mid}
\renewcommand{\th}{{\hbox{\scriptsize th}}}
\def\<#1>{\langle#1\rangle}
\newcommand{\ORD}{\mathop{{\rm ORD}}}
\newcommand{\ZFC}{{\rm ZFC}}
\newcommand{\ZF}{{\rm ZF}}
\newcommand{\KM}{{\rm KM}}
\newcommand{\GBC}{{\rm GBC}}
\newcommand{\GCH}{{\rm GCH}}
\newcommand{\HOD}{{\rm HOD}}
\newcommand{\PA}{{\rm PA}}
\newcommand{\cell}[1]{\boxit{\hbox to 17pt{\strut\hfil$#1$\hfil}}}
\newcommand{\head}[2]{\lower2pt\vbox{\hbox{\strut\footnotesize\it\hskip3pt#2}\boxit{\cell#1}}}
\newcommand{\boxit}[1]{\setbox4=\hbox{\kern2pt#1\kern2pt}\hbox{\vrule\vbox{\hrule\kern2pt\box4\kern2pt\hrule}\vrule}}
\newcommand{\Col}[3]{\hbox{\vbox{\baselineskip=0pt\parskip=0pt\cell#1\cell#2\cell#3}}}
\newcommand{\tapenames}{\raise 5pt\vbox to .7in{\hbox to .8in{\it\hfill input: \strut}\vfill\hbox to
.8in{\it\hfill scratch: \strut}\vfill\hbox to .8in{\it\hfill output: \strut}}}
\newcommand{\Head}[4]{\lower2pt\vbox{\hbox to25pt{\strut\footnotesize\it\hfill#4\hfill}\boxit{\Col#1#2#3}}}
\newcommand{\Dots}{\raise 5pt\vbox to .7in{\hbox{\ $\cdots$\strut}\vfill\hbox{\ $\cdots$\strut}\vfill\hbox{\
$\cdots$\strut}}}
\newcommand{\df}{\it} 
\begin{document}
\author[Hamkins]{Joel David Hamkins}
\address{J. D. Hamkins, Mathematics,
The Graduate Center of The City University of New York, 365
Fifth Avenue, New York, NY 10016 \& Mathematics, The
College of Staten Island of CUNY, Staten Island, NY 10314}
\email{jhamkins@gc.cuny.edu, http://jdh.hamkins.org}
\thanks{The research of the first author has been
supported in part by grants from the National Science
Foundation, the CUNY Research Foundation and the Simons
Foundation. The research of the third author has been
supported in part by grants from the CUNY Research
Foundation.}
\author{David Linetsky}
\address{D. Linetsky, Mathematics,
The Graduate Center of The City University of New York, 365
Fifth Avenue, New York, NY 10016}
\email{dlinetsky@gmail.com}
\author{Jonas Reitz}
\address{J. Reitz, New York City College of Technology of The City University of New York, Mathematics, 300 Jay Street, Brooklyn, NY 11201}
\email{jonasreitz@gmail.com}
\subjclass[2000]{03E55}\keywords{set theory, forcing}
\begin{abstract}
A pointwise definable model is one in which every object is
definable without parameters. In a model of set theory,
this property strengthens $V=\HOD$, but is not first-order
expressible. Nevertheless, if \ZFC\ is consistent, then
there are continuum many pointwise definable models of
\ZFC. If there is a transitive model of \ZFC, then there
are continuum many pointwise definable transitive models of
\ZFC. What is more, every countable model of \ZFC\ has a
class forcing extension that is pointwise definable.
Indeed, for the main contribution of this article, every
countable model of G\"odel-Bernays set theory has a
pointwise definable extension, in which every set and class
is first-order definable without parameters.
\end{abstract}

\title{Pointwise Definable Models of Set Theory}
\maketitle

\section{Introduction by way of a curious logic conundrum}

One occasionally hears the argument---let us call it the
math-tea argument, for perhaps it is heard at a good math
tea---that there must be real numbers that we cannot
describe or define, because there are are only countably
many definitions, but uncountably many reals. Does it
withstand scrutiny?\footnote{See
\cite{MO44102:AnalysisInFactAnalysisOfDefinableNumbers} for
an instance of the argument at MathOverflow, which surely
serves a brisk cup of math tea online. We leave aside the
remark of Horatio, eight-year-old son of the first author,
who announced, ``Sure, papa, I can describe any number. Let
me show you: tell me any number, and I'll tell you a
description of it!''}

We can be precise, of course, and define that an element
$a$ of a structure $\mathcal M$ is {\df definable} (without
parameters) when there is a formula $\varphi(x)$ in the
language of $\mathcal M$ such that ${\mathcal M}\satisfies
\varphi[x]$ only at $x=a$. In the continuum of the ordered
real line $\<\R,{<}>$, for example, there are no definable
elements, since the automorphism group acts transitively by
translation and all points look alike. With the ordered
field structure $\<\R,+,\cdot,0,1,<>$, however, every
algebraic number becomes definable, but only these, by
Tarski's theorem on real closed fields. As one adds
additional structure
$\<\R,+,\cdot,0,1,<,\Z,\sin(x),e^x,\cdots>$, the collection
of definable reals grows larger. Eventually, we will in our
definitions be attracted to the possibilities of using
higher order mathematical objects and constructions, such
as function classes, spaces or measures, and this amounts
to defining objects in increasingly large fragments
$V_\alpha$ of the set-theoretic universe. Most all of the
classical mathematical structure is itself definable in the
set-theoretic structure $\<V_{\omega+\omega},{\in}>$, a
model of the Zermelo axioms, and so the definable reals of
this structure includes almost every real ever defined
classically. The structures arising with larger ordinals,
however, allow us to define even more reals.

It would be a kind of cheating, for the problem at hand, to
define a real $r$ or other object by using a language or
structure that was itself otherwise undefinable or
uncanonical, such as by using a constant symbol with value
$r$ or a unary predicate satisfied only at $r$; this would
be like defining $\pi$ as ``the value of the constant
symbol $\pi$,'' which is surely unsatisfying. Similarly, we
do not want to offer a definition of $r$ in some enormous
$\<V_\alpha,{\in}>$ when $\alpha$ is not itself somehow
definable, since in effect this is using $\alpha$ as a kind
of parameter. Rather, the spirit of the problem seems to be
to define reals using only structure that is itself
definable in the set-theoretic background. This amounts, of
course, simply to defining the real directly with respect
to the set-theoretic background $\<V,{\in}>$ in the first
place.

In any fixed structure $\cal M$ in a countable language,
including the higher-order set-theoretic structures
$\<V_\alpha,{\in}>$, the math-tea argument seems fine:
since there are only countably many definitions to use, but
uncountably many reals, there will indeed be many reals
that are not definable there.

But when we make the move as we have discussed to defining
reals or other objects with respect to the set-theoretic
background $\<V,{\in}>$, a subtle meta-mathematical
obstacle arises for the math-tea argument. Specifically, in
order to count the definable objects, the argument presumes
that we have a way of associating to each definable object
a definition of it, a definability map $r\mapsto\psi$,
where $r$ is the unique object satisfying $\psi(r)$. This
is not a problem when we are working with definitions over
a set structure, since the satisfaction relation over a set
structure is definable from that structure, but it becomes
problematic with definitions over $V$, for reasons
connected with Tarski's theorem on the non-definability of
truth. Basically, there is no uniform first-order way to
express the concept ``$x$ is defined by formula $\psi$''
within set theory. This obstacle for the math-tea argument
suggests a possibility that perhaps one could live in a
model of \ZFC\ in which every real is definable without
parameters.

\begin{question}\label{Question.AllRealsDefinable?}
Is it consistent with the axioms of set theory that every
real is definable in the language of set theory without
parameters?
\end{question}

The answer is Yes. Indeed, much more is true: if the \ZFC\
axioms of set theory are consistent, then there are models
of \ZFC\ in which every object, including every real
number, every function on the reals, every set of reals,
every topological space, every ordinal and so on, is
uniquely definable without parameters. Inside such a
universe, the math-tea argument comes ultimately to a false
conclusion.

\begin{definition}\rm
A first-order structure $\mathcal M$ is {\df pointwise
definable} if every element of $\mathcal M$ is definable in
$\mathcal M$ without parameters.
\end{definition}

In a pointwise definable model, every object can be
specified as the unique object with some first-order
property. In such models, all objects are discernible;
every object satisfies a unique principal complete
$1$-type. Notice that the property of being pointwise
definable is not first-order expressible, since it is not
preserved by elementary extensions. Clearly, pointwise
definable models in a countable language must be countable,
since there are only countably many definitions.

\begin{theorem}\ \label{MainTheorem}
 \begin{enumerate}
  \item If\/ \ZFC\ is consistent, then there are
      continuum many non-isomorphic pointwise definable
      models of \ZFC.
  \item If there is a transitive model of \ZFC, then
      there are continuum many transitive
      pointwise-definable models of \ZFC.
  \item Every countable model of \ZFC\ has a class
      forcing extension that is pointwise definable.
  \item Every countable model of \GBC\ has a pointwise
      definable extension, in which every set and class
      is first-order definable without parameters.
 \end{enumerate}
\end{theorem}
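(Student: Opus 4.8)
The plan is to realize the desired model as a class‑forcing extension of the given countable $\langle M,\mathcal C\rangle\models\GBC$, constructing the generic object by hand --- exploiting that $M$ is countable --- so that the \emph{whole} generic, and hence everything in the extension, becomes first‑order definable without parameters. The guiding idea is to make the generic ``self‑describing'': it will code, in definably located slots, enough information to pin itself (and every ordinal) down, which side‑steps the non‑definability of truth by committing to one particular generic rather than having to quantify over all of them. It is cleanest to split the work into a \emph{set‑coding engine}, which already proves (3) (and, run over countable transitive models while counting the continuum‑many available generics, also proves (1) and (2)), and a \emph{class layer} on top of it specific to $\GBC$.

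The engine is a definable, Easton‑support class iteration $\mathbb P=\langle\mathbb P_\alpha,\dot{\mathbb Q}_\alpha:\alpha\in\mathrm{Ord}\rangle$ which, along a parameter‑free definable increasing sequence of regular cardinals $\langle\gamma_\alpha:\alpha\in\mathrm{Ord}\rangle$, at stage $\alpha$ forces a choice between $2^{\gamma_\alpha}=\gamma_\alpha^{+}$ and $2^{\gamma_\alpha}=\gamma_\alpha^{++}$, arranged by a Jensen‑style local coding at each stage so that the full generic at that stage, together with a well‑order of $V_{\gamma_\alpha}$ of the then‑current intermediate model, is recoverable from the pattern. The generic $G$ thereby fixes a class $B\subseteq\mathrm{Ord}$, read off directly from the continuum function and hence parameter‑free definable in $M[G]$, from which one recovers a parameter‑free definable global well‑order of $M[G]$ of order type $\mathrm{Ord}$. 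Since $M$ is countable I may choose $G$: by a bookkeeping recursion I build $G$ to meet every dense definable class --- so that $\GBC$ and the standard preservation and coding lemmas for tame class forcing survive into $M[G]$ --- while simultaneously making $B$ \emph{self‑locating}, in that on a definable partition of $\mathrm{Ord}$ into blocks the block of $B$ indexed by $\delta$ codes, via the canonical pairing, a well‑order of order type $\delta$ together with a tag distinguishing that block among its neighbors. The payoff is that in $M[G]$ every ordinal $\delta$ is parameter‑free definable --- namely as the order type coded in the unique acceptable block whose tag matches that very order type --- and therefore, each set occupying some (now definable) ordinal position in the definable global well‑order, every set of $M[G]$ is parameter‑free definable as well. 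This establishes (3).

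To also make every \emph{class} definable --- and this is where $\GBC$ rather than $\ZFC$ is doing the work --- I would run the same engine not over $\langle M,\mathcal C\rangle$ directly but inside its first‑order companion model: by the standard unrolling of $\GBC$, $\langle M,\mathcal C\rangle$ has a countable first‑order companion $N$, a model of a first‑order, $\ZFC$‑like theory $T$ in which the classes of $M$ have become ordinary sets (the sets of top rank), with the two structures mutually interpretable so that first‑order definability over $\langle M,{\in},\mathcal C\rangle$ in the two‑sorted language transfers uniformly to and from first‑order definability over $\langle N,{\in}\rangle$. Running the set‑coding engine inside $N$ makes every element of the resulting pointwise definable $N[G]\models T$ definable, the former classes of $M$ included; rolling back down then yields a $\GBC$ model $\langle M',\mathcal C'\rangle$ --- a class‑forcing extension of $\langle M,\mathcal C\rangle$ --- in which a parameter‑free definition of every set and of every class is obtained by translating the corresponding definition in $N[G]$. \emph{The main obstacle is making the engine run at the top of $N$:} because $T$ posits a largest cardinal $\Theta$ --- the former ordinals of $M$ --- and accordingly fails full Replacement there, the coding iteration now lives essentially at the very top of $N$, so one must verify that it is definable and $T$‑preserving, that the self‑locating bookkeeping still goes through over the countable model $N$, and, crucially, that this top‑level forcing over $N$ corresponds under the interpretation to a legitimate class forcing over $\langle M,\mathcal C\rangle$, so that $\langle M',\mathcal C'\rangle$ really is a $\GBC$ extension. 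Managing all of this at once --- a self‑referential coding carried out simultaneously for sets and classes, at the top of the companion model --- is what makes (4) substantially harder than (3) and is the article's principal contribution.
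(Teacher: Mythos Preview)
Your approach to (3) is in the right spirit but organized differently from the paper. The paper separates the work cleanly into three modular steps: first Simpson's theorem produces an amenable class $U$ so that $\langle M,\in,U\rangle$ is pointwise definable; then $U$ is coded into the $\GCH$ pattern on one definable sequence of cardinals; then a further progressively closed lottery iteration forces $V=\HOD$ on an interleaved sequence. Your ``self-locating'' single engine is plausible, but the circular-sounding device (the block indexed by $\delta$ codes a well-order of type $\delta$, and $\delta$ is recovered as the unique ordinal for which this happens) would need real care to make precise, whereas the paper's modular decomposition avoids that delicacy entirely. Also, your derivation of (1) from the engine is too quick: (1) assumes only $\mathrm{Con}(\ZFC)$, not the existence of a transitive model, and two pointwise definable models with the same theory are isomorphic, so varying the generic alone does not obviously give continuum many \emph{non-isomorphic} models; the paper instead invokes G\"odel--Rosser to get continuum many completions of $\ZFC+V=\HOD$ and takes the prime model of each.

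The serious gap is in (4). The ``standard unrolling of $\GBC$'' you invoke does not exist in the generality you need. The unrolling/companion-model technique turns a second-order model into a first-order one by building a membership code on the classes, but this requires class recursion well beyond what $\GBC$ proves---one needs something like Kelley--Morse or at least $\GBC+\mathrm{ETR}$ to carry it out, and the paper explicitly notes that no $\KM$ model is principal while every $\GBC$ model arising from a $\ZFC$ model by adjoining definable classes \emph{is} principal, so the two theories behave quite differently here. Even granting some companion $N$, you yourself flag that the engine must now run at the very top of $N$ where Replacement fails, and you do not explain how to define the forcing relation, prove preservation, or translate the result back to a genuine $\GBC$ extension; these are not routine verifications. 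The paper's route is entirely different and does not pass through any companion model: it first shows (the Friedman/Kossak/Schmerl argument, adapted from models of $\PA$) that every countable $\GBC$ model extends, without adding sets, to a \emph{principal} $\GBC$ model---one in which all classes are definable from a single class $Y$---via a nested sequence of perfect subtrees of $2^{<\ORD}$ whose intersection is a branch that is $\Sigma_n$-generic for every $n$ and simultaneously codes each class $A_n$. Once the model is principal, the method of (3) applied to $\langle M,\in,Y\rangle$ finishes. That reduction to the principal case is the genuine new idea for (4), and your proposal does not contain it.
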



The first two claims have fairly soft proofs and might be
considered to be a part of the mathematical folklore.
Statement (1) could be credited to Myhill
\cite{Myhill1952:TheHypothesisThatAllClassesAreNameable}.
The latter two claims are more substantial forcing
arguments. Statement (3) is mentioned independently in
\cite{Enayat2005:ModelsOfSetTheoryWithDefinableOrdinals}
and, in the case of countable transitive models, in
\cite{David1982:SomeApplicationOfJensensCodingTheorem}. Our
main contribution is statement (4), which implies the
earlier statements. The rest of this article is devoted to
proving these facts and several other related results we
find interesting. We begin in section \ref{Section.PDM}
with some elementary observations about pointwise definable
models of set theory and prove the easier initial results.
In sections \ref{Section.PDForcingExtensions} and
\ref{Section.NGBC}, we move into the forcing arguments,
relying on work of Simpson in the case of \ZFC\ and a
result of S. Friedman, building on a result of Kossak and
Schmerl from \PA, for the fully general \GBC\ case.

\section{Pointwise Definable Models of
\ZFC}\label{Section.PDM}

The first task is to establish the basic fact that
pointwise definable models of set theory do indeed exist.

\begin{theorem}\label{Theorem.Con(ZFC)->PDM}
 If there is a model of \ZFC, then there is a pointwise
 definable model of \ZFC. Indeed, there are continuum many
 non-isomorphic such models.
\end{theorem}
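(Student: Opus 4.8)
The plan is to reduce to a model carrying a parameter-free definable well-ordering of its universe and then pass to its hull of definable elements. First I would observe that $\HOD$ of any model of $\ZFC$ is a model of $\ZFC + V = \HOD$, so consistency of $\ZFC$ yields consistency of $\ZFC + V = \HOD$; moreover this theory is essentially undecidable, since it interprets arithmetic, and hence the space of its complete consistent extensions is a nonempty perfect subset of Cantor space and so has size continuum. Now fix any such completion $T$ and any $N \models T$. Since $N \models V = \HOD$, there is a formula defining, without parameters, a well-ordering $<_N$ of the entire universe of $N$, namely the canonical $\HOD$-order. Let $D$ be the collection of elements of $N$ that are definable in $N$ without parameters.

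The heart of the matter is the claim that $D \prec N$, which I would establish via the Tarski--Vaught criterion. Given a formula $\varphi(x,\bar y)$ and parameters $\bar a \in D$ with $N \models \exists x\, \varphi(x,\bar a)$, note that since each coordinate of $\bar a$ is itself definable without parameters, the class $\{ x : \varphi(x,\bar a) \}$ is definable in $N$ without parameters, and therefore so is its $<_N$-least element $b$; thus $b \in D$ witnesses the existential. Granting $D \prec N$, it follows that $D \models T$, so in particular $D \models \ZFC$, and $D$ is pointwise definable, since any $a \in D$ is defined in $N$ by some $\varphi$ and, $D$ being elementary, the same $\varphi$ defines $a$ inside $D$. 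For the ``continuum many'' clause I would use that a pointwise definable model is determined up to isomorphism by its complete theory: if $M$ and $M'$ are pointwise definable with the same theory, then the map sending $a \in M$, defined by $\varphi_a$ in $M$, to the unique $b \in M'$ with $M' \models \varphi_a(b)$ is an isomorphism. Hence the models $D$ obtained from the continuum many distinct completions $T$ are pairwise non-isomorphic, and there are at most continuum many altogether since a pointwise definable model in a countable language is countable.

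I expect the one real obstacle to be the elementarity $D \prec N$, together with the recognition that it genuinely requires a parameter-free definable global well-ordering---hence the detour through $\HOD$, though the inner model $L^M$ with $<_L$ would do equally well. Over an arbitrary model of $\ZFC$ the definable elements need not form an elementary submodel, because a nonempty parameter-free definable class, of reals say, can have no parameter-free definable member. The remaining ingredients---that $\HOD$ satisfies $\ZFC + V = \HOD$ and carries its canonical parameter-free definable well-order, and that a consistent recursively enumerable essentially undecidable theory has a perfect set of complete consistent extensions---are standard facts I would simply cite.
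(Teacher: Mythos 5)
Your argument is, in outline, exactly the paper's: pass to a model of $\ZFC+V=\HOD$, use the parameter-free definable well-ordering to get definable Skolem functions (your Tarski--Vaught step with the $<_N$-least witness is the same device), conclude that the definable elements form an elementary submodel $D\prec N$ which is then pointwise definable, and get continuum many pairwise non-isomorphic examples from the continuum many complete consistent extensions of $\ZFC+V=\HOD$ furnished by G\"odel--Rosser, using the fact that a pointwise definable model is determined up to isomorphism by its theory. All of that is fine.

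The one genuine flaw is the opening claim that ``$\HOD$ of any model of $\ZFC$ is a model of $\ZFC+V=\HOD$.'' That $\HOD^M\models\ZFC$ is G\"odel's theorem, but $\HOD^M\models V=\HOD$ is not provable: ordinal definability is not absolute to $\HOD$, since the defining property refers to satisfaction in the $V_\alpha$'s of the ambient model, and it is consistent (McAloon) that $\HOD^{\HOD}\subsetneq\HOD$, indeed that the iterated $\HOD$ sequence is strictly decreasing. So this step, as stated, can fail. The repair is immediate and you essentially name it yourself: use $L^M$, since $V=L$ implies $V=\HOD$ (the $L$-order is parameter-free definable), or alternatively force $V=\HOD$ over a countable model as the paper does in its later arguments. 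With $\HOD^M$ replaced by $L^M$ (or with the consistency of $\ZFC+V=\HOD$ simply cited, as the paper does), the rest of your proof goes through verbatim.
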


\begin{proof}
If there is a model of \ZFC, then there is a model
$M\satisfies\ZFC+V=\HOD$. Such a model has a parameter-free
definable well-ordering of the universe, and therefore it
has parameter-free definable Skolem functions, which simply
select the least witness with respect to the definable
well-ordering. If $M_0$ is the collection of all definable
elements of $M$, then it is closed under these definable
Skolem functions and thus $M_0\elesub M$. From this, it
follows that definitions work the same in $M_0$ as in $M$,
and so every object of $M_0$ is definable in $M_0$. Thus,
$M_0$ is a pointwise definable model of \ZFC. By the
G\"odel-Rosser theorem, there are continuum many consistent
completions of $\ZFC+V=\HOD$, and we have established that
each such theory has a pointwise definable model.
\end{proof}

Myhill
\cite{Myhill1952:TheHypothesisThatAllClassesAreNameable}
was evidently the first to observe (in contemporary
language) that if \ZFC\ is consistent, then there is a
pointwise definable model of $\GBC+V=L$, by essentially the
argument we have given, using the definable Skolem
functions of $L$. Indeed, Myhill was fully aware of the
implications for the math-tea argument, for he concludes
his otherwise terse article with:
\begin{quote}
 One often hears it said that since there are indenumerably many
 sets and only denumerably many names, therefore there must
 be nameless sets. The above shows this argument to be
 fallacious.
\end{quote}

The fact is that every pointwise definable model of \ZFC\
arises in exactly the manner of the proof of theorem
\ref{Theorem.Con(ZFC)->PDM}. A {\df prime} model is one
that embeds elementarily into every model of its theory.

\begin{observation}\label{Observation.PDMiffDefinableElementsHOD}
The following are equivalent:
 \begin{enumerate}
  \item $M$ is a pointwise definable model of \ZFC.
  \item $M$ consists of the definable elements, without
      parameters, of a model of $\ZFC+V=\HOD$.
  \item $M$ is a prime model of $\ZFC+V=\HOD$.
 \end{enumerate}
\end{observation}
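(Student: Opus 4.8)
The plan is to establish the cycle $(1)\Rightarrow(2)\Rightarrow(3)\Rightarrow(1)$, reusing the elementary-submodel argument from Theorem~\ref{Theorem.Con(ZFC)->PDM} together with a short model-theoretic observation about the definability map. For $(1)\Rightarrow(2)$, note that if every element of $M$ is definable without parameters, then in particular every element of $M$ is ordinal-definable, so $M\satisfies V=\HOD$; one may then take the model required in $(2)$ to be $M$ itself, of which $M$ is trivially the class of parameter-free definable elements. For the first half of $(2)\Rightarrow(3)$, suppose $M$ is the collection of parameter-free definable elements of some $N\satisfies\ZFC+V=\HOD$. As in the proof of Theorem~\ref{Theorem.Con(ZFC)->PDM}, $N$ has parameter-free definable Skolem functions---selecting the least witness in the definable well-ordering---so $M$ is closed under them and hence $M\elesub N$; consequently $M\satisfies\ZFC+V=\HOD$ and, since definitions are computed the same way in $M$ as in $N$, every element of $M$ is definable in $M$ by the very formula that defined it in $N$. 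Thus $M$ is pointwise definable.

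It remains, for $(2)\Rightarrow(3)$, to check that such an $M$ is a prime model of $\ZFC+V=\HOD$. Let $P$ be any model of the complete theory $\mathrm{Th}(M)$. For each $a\in M$ fix a formula $\varphi_a(x)$ defining $a$ in $M$; since $M\satisfies\exists!x\,\varphi_a(x)$, this also holds in $P$, so we may let $j(a)$ be the unique witness of $\varphi_a$ in $P$. The resulting map $j\colon M\to P$ is elementary: given a formula $\theta(\vec x)$ and parameters $\vec a$ from $M$, the sentence $\forall\vec x\,\bigl(\bigwedge_i\varphi_{a_i}(x_i)\to\theta(\vec x)\bigr)$ is equivalent over $M$ to $\theta(\vec a)$ and over $P$ to $\theta(j(\vec a))$, because in each model the $\varphi_{a_i}$ pin down their unique witnesses; as this sentence belongs to $\mathrm{Th}(M)$ it holds in both structures or in neither, giving $M\satisfies\theta(\vec a)$ iff $P\satisfies\theta(j(\vec a))$. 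Hence $M$ embeds elementarily into every model of its theory, so $M$ is a prime model of $\ZFC+V=\HOD$.

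For $(3)\Rightarrow(1)$, which I expect to be the main obstacle, assume $M$ is a prime model of $\ZFC+V=\HOD$ and let $M_0\subseteq M$ be the collection of parameter-free definable elements of $M$. Since $M\satisfies V=\HOD$, the definable-Skolem-function argument again yields $M_0\elesub M$, and $M_0$ is itself pointwise definable. Then $M_0$ is a model of $\mathrm{Th}(M)$, so by primeness there is an elementary embedding $j\colon M\to M_0$. Restricting to $M_0$ and using $M_0\elesub M$, the map $j\restriction M_0\colon M_0\to M_0$ is an elementary self-embedding of the pointwise definable model $M_0$, hence the identity, since $j$ must preserve the unique witness of each element's defining formula. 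Therefore $j$ fixes $M_0$ pointwise; but then for arbitrary $a\in M$ we have $j(a)\in M_0$ and so $j(j(a))=j(a)$, whence $a=j(a)\in M_0$ by injectivity of $j$. Thus $M=M_0$ is pointwise definable, closing the cycle. The delicate point is precisely this last step---leveraging primeness to collapse $M$ onto its own definable core---whereas everything else is a repackaging of the elementary-submodel bookkeeping already developed for Theorem~\ref{Theorem.Con(ZFC)->PDM}.
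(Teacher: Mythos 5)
Your proof is correct and follows essentially the same route as the paper: the definable Skolem functions provided by $V=\HOD$ give the elementary hull of parameter-free definable elements, and definitions transfer along elementary maps to yield primeness. The only difference is one of detail---you spell out the prime-implies-pointwise-definable direction explicitly (the embedding $j\colon M\to M_0$ fixes the definable core and injectivity then forces $M=M_0$), a step the paper's terse remark leaves implicit.
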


\begin{proof}
The first statement implies the second since every
pointwise-definable model satisfies $V=\HOD$, and the
converse implication is the proof of theorem
\ref{Theorem.Con(ZFC)->PDM}. These models are exactly the
prime models of $\ZFC+V=\HOD$, since the theory admits
definable Skolem functions, and so every model of
$\ZFC+V=\HOD$ elementarily embeds the definable hull of the
empty set in that model.
\end{proof}

More generally, if $M$ is any model of \ZF, then the
collection $M_0$ consisting of the elements of $M$ that are
definable without parameters in $M$ is an elementary
substructure of $\HOD^M$, because it is closed under the
canonical Skolem functions for $\HOD^M$, which are
definable in $M$. A similar fact is observed in
\cite[Theorem
2.11]{Enayat2005:ModelsOfSetTheoryWithDefinableOrdinals}.
The model $M_0$ is therefore prime relative to $M$, in the
sense that whenever $M\equiv N$, then $M_0$ embeds
elementarily in $\HOD^N$.

Note that if two pointwise definable models have the same
theory, then they are isomorphic, since the definitions of
the elements tell you the isomorphism. Next, we show that
there are also numerous well-founded pointwise definable
models of set theory, if there is a well-founded model of
set theory at all.

\begin{theorem}\label{Theorem.TransitiveModelZFC-->TransitivePDM}
If there is a transitive model of \ZFC, then there are
continuum many transitive pointwise-definable models of
\ZFC.
\end{theorem}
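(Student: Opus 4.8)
The plan is to produce many pointwise definable transitive models by forming definable hulls inside $L$ and collapsing. Starting from a transitive model of $\ZFC$, the L\"owenheim--Skolem theorem together with the Mostowski collapse yields a countable transitive model, and passing to its constructible universe I may assume I have a countable transitive $M\satisfies\ZFC+V=L$, so $M=L_\delta$ with $L_\delta\satisfies\ZFC$. Call any ordinal $\gamma$ with $L_\gamma\satisfies\ZFC$ a $\ZFC$-height. The engine of the argument is the observation that for any $\ZFC$-height $\gamma$, the set $H_\gamma$ of parameter-free definable elements of $L_\gamma$ is an elementary substructure $H_\gamma\elesub L_\gamma$ --- by the argument in the proof of Theorem~\ref{Theorem.Con(ZFC)->PDM}, since $L_\gamma\satisfies V=L$ supplies the canonical $\Sigma_1$-definable well-ordering and hence parameter-free definable Skolem functions --- and $H_\gamma$ is pointwise definable, being exactly the definable elements of $L_\gamma$. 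Because $H_\gamma$ is well-founded, its Mostowski collapse is transitive, and by G\"odel's condensation lemma it has the form $L_{\bar\gamma}$ for some $\bar\gamma\le\gamma$; since isomorphisms transport definitions, $L_{\bar\gamma}$ is a pointwise definable transitive model of $\ZFC+V=L$, hence of $\ZFC$. Thus each $\ZFC$-height $\gamma$ yields such a model $L_{\bar\gamma}$, and the minimal ones (for which $\bar\gamma=\gamma$) are themselves pointwise definable.

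What remains is to produce continuum many of these that are pairwise non-isomorphic. Since two pointwise definable models with the same theory are isomorphic, and distinct transitive structures are never isomorphic, it suffices to exhibit continuum many $\ZFC$-heights with pairwise distinct first-order theories (equivalently, to show that the core operation $\gamma\mapsto\bar\gamma$ has range of size continuum). The hypothesis already supplies $\ZFC$-heights in abundance: from a single one, forming the transitive collapse of a countable elementary submodel of a large enough $L_\theta$ that contains a prescribed countable ordinal produces, via condensation, further $\ZFC$-heights reaching up through $\omega_1$. To arrange continuum \emph{many distinct theories} among them, one natural approach is a tree-of-models construction in the style of the G\"odel--Rosser theorem: build, along a finitely branching tree of countable transitive models, structures whose theories successively decide the sentences of an independent family, so that distinct branches give $\ZFC$-heights with distinct theories and hence distinct pointwise definable transitive models $L_{\bar\gamma}$.

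The definable-hull-and-collapse machinery is routine; the main obstacle is this last counting step --- ensuring that the many $\ZFC$-heights do not all collapse onto a small family of ``minimal'' theories but instead realize continuum many distinct ones. This is precisely where the transitive case asks for more than the bare consistency statement~(1), whose G\"odel--Rosser argument never had to find well-founded models at all, so that some genuine control of which G\"odel--Rosser completions of $\ZFC+V=L$ are realized by an actual $L_\gamma$ is needed.
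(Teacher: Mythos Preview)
Your construction of a single transitive pointwise definable model via the definable hull and condensation is correct and matches the paper's opening paragraph. The difficulty is entirely in the counting step, and you essentially concede this in your final paragraph without resolving it.

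There are two concrete gaps. First, the hypothesis gives you only a single transitive model of \ZFC, hence a single \ZFC-height $\delta$. Your claim that one can manufacture further \ZFC-heights ``reaching up through $\omega_1$'' by collapsing countable elementary submodels of larger $L_\theta$ does not follow: the collapse of $N\elesub L_\theta$ is some $L_\eta\equiv L_\theta$, but $L_\theta$ need not satisfy \ZFC, so neither need $L_\eta$; and if instead you take $N\elesub L_\delta$, the collapse lands at or below $\delta$, not above your prescribed ordinal. The existence of unboundedly many $\alpha<\omega_1^L$ with $L_\alpha\satisfies\ZFC$ is a strictly stronger hypothesis---it is exactly the hypothesis of Theorem~\ref{Theorem.ArbLargeLalphaPointwiseDefinable}, stated separately for that reason. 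Second, even granting that stronger hypothesis, every pointwise definable $L_{\bar\gamma}$ you produce lives in $L$ and there are at most $\omega_1^L$ of them; if $\omega_1^L$ is countable in $V$, or merely if \CH\ fails in $V$, you cannot reach continuum-in-$V$ many. The G\"odel--Rosser tree yields continuum many \emph{consistent} completions of $\ZFC+V=L$, but you have no mechanism for realizing them by well-founded $L_\gamma$, and in general most will not be so realized.

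The paper avoids both problems by leaving $L$ and using forcing. Over a fixed countable transitive $M$ there is a perfect set of $M$-generic Cohen reals $c$; for each one, form $M[c]$ and then force $V=\HOD$ in a further extension $M[c][G]$ arranged so that $c$ itself is parameter-free definable (for instance, code the digits of $c$ into the \GCH\ pattern at the $\aleph_n$). The definable hull of $M[c][G]$ then collapses to a transitive pointwise definable model containing $c$, and distinct Cohen reals $c$ yield distinct such models. This gives continuum-in-$V$ many directly, with no appeal to which completions of $\ZFC+V=L$ admit well-founded models.
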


\begin{proof}
If there is a transitive model $M$ of \ZFC, then there is a
transitive model $N$ of $\ZFC+V=\HOD$, such as the model
$L^M$ or any of the models obtained by forcing $V=\HOD$
over $M$, and such models exist if $M$ is countable. By
observation \ref{Observation.PDMiffDefinableElementsHOD},
the collection of parameter-free definable elements of $N$
is a well-founded pointwise-definable elementary
substructure, whose Mostowski collapse is as desired.

To construct continuum many such models, suppose that $M$
is any countable transitive model of \ZFC. Since $M$ is
countable, there are a perfect set of $M$-generic Cohen
reals $c$, with which we may form the forcing extension
$M[c]$. By any of the usual methods, such as coding sets
into the \GCH\ pattern (which we will shall explain in
greater detail in section
\ref{Section.PDForcingExtensions}), we may form a further
forcing extension $M[c][G]$ satisfying $\ZFC+V=\HOD$. The
$M[c]$-generic filter $G$ is built in a diagonal
construction meeting the countably many dense classes of
$M[c]$. By ensuring that $c$ is the first set to be coded
into the \GCH\ pattern---we could arrange that the \GCH\
holds in $M[G]$ at $\aleph_n$ exactly when the $n^\th$
digit of $c$ is $1$---we may assume that $c$ is definable
without parameters in $M[c][G]$. Thus, if $M_0$ is the
collection of definable elements of $M[c][G]$, then it is
pointwise definable and contains $c$. Since there are
continuum many such reals $c$, we have thereby produced
continuum many transitive pointwise definable models of
\ZFC.
\end{proof}

A similar observation establishes the following folklore
result. The {\df minimal transitive model} of \ZFC\ is
$L_\alpha$ for the smallest ordinal $\alpha$ for which
$L_\alpha\satisfies\ZFC$, if any such ordinal exists. If
$M$ is any transitive model of \ZFC, then
$L^M\satisfies\ZFC$ and $L^M=L_\alpha$ for $\alpha=\ORD^M$,
and so the minimal model exists and is included within $M$,
thereby justifying its name.

\begin{theorem}\label{Theorem.MinimalModel}
The minimal transitive model of \ZFC\ is pointwise
definable. If $L_\beta$ is pointwise definable, then the
next $\hat\beta>\beta$ with $L_{\hat\beta}\satisfies\ZFC$,
if it exists, is also pointwise definable.
\end{theorem}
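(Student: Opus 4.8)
The plan is to handle both statements by the device behind theorem~\ref{Theorem.TransitiveModelZFC-->TransitivePDM}: from a transitive model of $\ZFC+V=L$, form its parameter-free definable hull --- an elementary substructure, since $V=L$ provides parameter-free definable Skolem functions, and one that is itself pointwise definable --- then take the Mostowski collapse of that hull, and use a minimality property of the ordinal in play to conclude that the collapse could not have lowered the ordinal height, so that the model already equalled its own definable hull and hence is pointwise definable.

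For the minimal transitive model $M=L_\alpha$, with $\alpha$ least such that $L_\alpha\satisfies\ZFC$, let $M_0\elesub M$ be the collection of parameter-free definable elements, which is pointwise definable by observation~\ref{Observation.PDMiffDefinableElementsHOD}, and let $\bar M_0$ be its Mostowski collapse. Then $\bar M_0$ is a transitive model of $\ZFC+V=L$, so $\bar M_0=L_{\bar\alpha}$ with $\bar\alpha=\ORD\cap\bar M_0$; since $\bar\alpha$ is the order type of $\ORD\cap M_0\subseteq\ORD\cap M=\alpha$, we have $\bar\alpha\leq\alpha$, while $L_{\bar\alpha}\satisfies\ZFC$ and the minimality of $\alpha$ give $\bar\alpha\geq\alpha$. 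Hence $\bar\alpha=\alpha$, so $L_\alpha=\bar M_0$ is pointwise definable, being isomorphic to the pointwise definable $M_0$; in fact $M_0=L_\alpha$, since every element of a pointwise definable model is definable in it.

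For the successor step, suppose $L_\beta$ is pointwise definable and $\hat\beta>\beta$ is least with $L_{\hat\beta}\satisfies\ZFC$, if such a $\hat\beta$ exists; we may assume $L_\beta\satisfies\ZFC$, which is the case relevant to iterating the construction. As before let $N_0\elesub L_{\hat\beta}$ be the parameter-free definable hull, a pointwise definable well-founded model whose Mostowski collapse is $L_{\bar\beta}$ for some $\bar\beta\leq\hat\beta$ with $L_{\bar\beta}\satisfies\ZFC$; the task is to show $\bar\beta=\hat\beta$. The crux is that $L_\beta\subseteq N_0$. First, $\beta$ is definable in $L_{\hat\beta}$: by the absoluteness of the satisfaction relation for set structures, $L_{\hat\beta}$ correctly decides whether $L_\gamma\satisfies\ZFC$ for each ordinal $\gamma<\hat\beta$, and since $L_\beta\satisfies\ZFC$ while $L_\gamma$ is not a model of \ZFC\ for $\beta<\gamma<\hat\beta$ by the minimality of $\hat\beta$, the ordinal $\beta$ is defined in $L_{\hat\beta}$ as the largest $\gamma$ with $L_\gamma\satisfies\ZFC$. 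Hence $\beta$ and $L_\beta$ lie in $N_0$, and the definability map sending a formula $\varphi$ to the unique $a\in L_\beta$ with $L_\beta\satisfies\varphi[a]$ is definable without parameters in $L_{\hat\beta}$ (being definable from the definable parameter $L_\beta$) and is genuinely onto $L_\beta$, since $L_\beta$ is pointwise definable and its satisfaction relation is absolute between $L_{\hat\beta}$ and $V$. So every element of $L_\beta$ is definable in $L_{\hat\beta}$, i.e.\ $L_\beta\subseteq N_0$. Now $L_\beta$ is a transitive subset of $N_0$, so the collapse map is the identity on $L_\beta$ and, since $\beta\subseteq L_\beta$ while $\beta\in N_0$, it fixes $\beta$ as well; thus $\beta\in L_{\bar\beta}$ and $\bar\beta>\beta$, and since $L_{\bar\beta}\satisfies\ZFC$ the minimality of $\hat\beta$ forces $\bar\beta=\hat\beta$. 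Therefore $L_{\hat\beta}=\bar N_0$ is pointwise definable.

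I expect the main obstacle to be precisely this claim that $L_\beta\subseteq N_0$. It requires, on the one hand, pinning down $\beta$ inside $L_{\hat\beta}$ --- which is where the minimality of $\hat\beta$ and the absoluteness of satisfaction for set structures come in --- and, on the other, exploiting the pointwise definability of $L_\beta$, a property not first-order expressible of $L_{\hat\beta}$, through the $L_{\hat\beta}$-definable copy of the satisfaction relation of $L_\beta$. Everything else --- that the definable hull is a well-founded elementary substructure collapsing to some $L_{\bar\beta}\satisfies\ZFC$ with $\bar\beta\leq\hat\beta$ --- is routine bookkeeping paralleling the transitive case.
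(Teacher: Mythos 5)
Your proof is correct and follows essentially the same route as the paper: collapse the parameter-free definable hull and use minimality of the ordinal, with the successor step hinging on defining $\beta$ in $L_{\hat\beta}$ as the largest ordinal whose $L$-level satisfies \ZFC\ and then transferring the relativized definitions of elements of $L_\beta$. You merely spell out details the paper leaves terse (the hull fixing $L_\beta$ under the collapse, the absoluteness of satisfaction), and your explicit assumption that $L_\beta\satisfies\ZFC$ matches the paper's own reading of the hypothesis.
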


\begin{proof}
If $L_\alpha$ is the minimal transitive model of \ZFC, then
by condensation the definable hull of $\emptyset$ in
$L_\alpha$ collapses to $L_\alpha$, and so every element of
$L_\alpha$ is definable in $L_\alpha$. If $L_\beta$ is a
pointwise definable model of \ZFC\ and $\hat\beta$ is the
smallest ordinal above $\beta$ for which
$L_{\hat\beta}\satisfies\ZFC$, then $\beta$ is definable in
$L_{\hat\beta}$ as the largest ordinal such that
$L_\beta\satisfies\ZFC$. It follows that every element of
$L_\beta$ is definable in $L_{\hat\beta}$, by using a
definition relativized to $L_\beta$, which is definable in
$L_{\hat\beta}$. If $D$ is the set of definable elements of
$L_{\hat\beta}$, then the Mostowski collapse of $D$ will
have height larger than $\beta$, and be a model of
$\ZFC+V=L$, so by the minimality of $\hat\beta$ it will be
all of $L_{\hat\beta}$. Thus, $L_{\hat\beta}$ is pointwise
definable, as desired.
\end{proof}

The phenomenon of theorem \ref{Theorem.MinimalModel}
extends to describable limits of such $L_\alpha$. For
example, the first $L_\beta\satisfies\ZFC$ for which
$\beta$ is a limit of $\alpha$ for which
$L_\alpha\satisfies\ZFC$ is also pointwise definable,
because the definable hull of $L_\beta$ will collapse to
such a limit and therefore collapse to $L_\beta$ itself.
And similarly for many other limits. But if $L_\alpha$ is
an elementary substructure of $L_\beta$, for
$\alpha\ne\beta$, then of course $L_\beta$ cannot be
pointwise definable, since the definable elements must lie
in the range of the embedding. Similarly, if
$\omega_1^L\leq\alpha$, then $L_\alpha$ cannot be pointwise
definable, irrespective of whether it satisfies \ZFC\ or
not, since this property is absolute to $L$ and such
$L_\alpha$ are uncountable in $L$.

\begin{theorem}\label{Theorem.ArbLargeLalphaPointwiseDefinable}
There are arbitrarily large $\xi<\omega_1^L$ for which
$\<L_\xi,\in>$ is pointwise definable. If there are
arbitrarily large $\alpha<\omega_1^L$ for which
$L_\alpha\satisfies\ZFC$, then there are arbitrarily large
such $\alpha$ for which $L_\alpha\satisfies\ZFC$ and is
pointwise definable.
\end{theorem}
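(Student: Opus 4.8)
The plan is to reduce both assertions to a single statement about the definability of ordinals. Since $L_\xi\satisfies V=L$, an induction on constructibility rank shows that every element of $\<L_\xi,\in>$ is definable there from finitely many ordinal parameters; consequently $\<L_\xi,\in>$ is pointwise definable precisely when every ordinal below $\xi$ is definable in it without parameters, equivalently precisely when the Skolem hull of $\emptyset$ in $\<L_\xi,\in>$, formed with the canonical definable Skolem functions arising from the definable well-ordering of $L_\xi$, is all of $L_\xi$. So for the first assertion it suffices to produce, for each $\gamma<\omega_1^L$, an ordinal $\xi$ with $\gamma<\xi<\omega_1^L$ such that every ordinal below $\xi$ is definable without parameters in $\<L_\xi,\in>$; for the second assertion we want in addition that $L_\xi\satisfies\ZFC$.

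For the first assertion, fix $\gamma<\omega_1^L$ and, working in $L$, fix a real $r$ coding a well-ordering of $\omega$ of order type at least $\gamma$. The strategy mirrors the proof of Theorem~\ref{Theorem.TransitiveModelZFC-->TransitivePDM}, where a Cohen real was made definable by coding it into the \GCH\ pattern: we want to locate a level $\<L_\delta,\in>$, satisfying Kripke--Platek set theory, in which such a code $r$ is definable \emph{without parameters} --- not by fiat, but because $\delta$ is the least ordinal with a suitable first-order property. Granting such a $\delta$ and $r$: since $L_\delta$ satisfies Kripke--Platek set theory it computes the order type $\rho\ge\gamma$ of $r$ and decodes the parameter-free definable $r$ into a definable bijection of $\omega$ with $\rho$, so that every ordinal below $\rho$, in particular every ordinal $\le\gamma$, becomes definable in $\<L_\delta,\in>$ from a natural-number parameter, hence without parameters at all. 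Passing to the transitive collapse $\<L_\xi,\in>$ of the parameter-free Skolem hull of $\<L_\delta,\in>$, we note that this hull contains $\gamma$ together with every smaller ordinal, so $\gamma<\xi\le\delta<\omega_1^L$, and $\<L_\xi,\in>$, being the collapse of such a hull, is pointwise definable. Since $\gamma$ was arbitrary, this produces pointwise definable $\<L_\xi,\in>$ with $\xi$ unbounded in $\omega_1^L$.

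For the second assertion, assume there are arbitrarily large $\alpha<\omega_1^L$ with $L_\alpha\satisfies\ZFC$, fix $\gamma<\omega_1^L$, and build an increasing sequence of heights of models of \ZFC\ by recursion. Let $\alpha_0$ be the height of the minimal transitive model of \ZFC, which is pointwise definable by Theorem~\ref{Theorem.MinimalModel}; let $\alpha_{\eta+1}$ be the least height of a model of \ZFC\ above $\alpha_\eta$, which is pointwise definable by the second half of Theorem~\ref{Theorem.MinimalModel}; and at a limit $\lambda$ let $\alpha_\lambda$ be the least height of a model of \ZFC\ that is at least $\sup_{\eta<\lambda}\alpha_\eta$. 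At this limit step $\<L_{\alpha_\lambda},\in>$ is again pointwise definable: there the heights of models of \ZFC\ below $\alpha_\lambda$ form exactly the definable set $\{\alpha_\eta:\eta<\lambda\}$, which is bounded in $\alpha_\lambda$ --- the supremum $\sup_{\eta<\lambda}\alpha_\eta$ is not itself such a height, since replacement fails there for the canonical enumeration --- and hence is a set in $L_{\alpha_\lambda}$ with definable supremum and order type, so that a routine but slightly fiddly induction, using the hypothesis that each $\<L_{\alpha_\eta},\in>$ is pointwise definable, shows that every ordinal below $\sup_{\eta<\lambda}\alpha_\eta$ and then --- by replacement and the definable well-ordering of $L_{\alpha_\lambda}$ --- every ordinal below $\alpha_\lambda$ is definable in $\<L_{\alpha_\lambda},\in>$ without parameters; this is the phenomenon indicated in the remark following Theorem~\ref{Theorem.MinimalModel}. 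The $\alpha_\eta$ strictly increase and remain below $\omega_1^L$, so some $\alpha_\eta$ exceeds $\gamma$, giving a pointwise definable model $L_{\alpha_\eta}$ of \ZFC\ with $\alpha_\eta>\gamma$.

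In both parts the real work is in eliminating parameters. For the first assertion the crux is to find a first-order property singling out the code $r$ inside $L_\delta$ without covertly referring to $\gamma$, and to check that the least level with that property lies below $\omega_1^L$; once $r$ is parameter-free definable, the remainder is bookkeeping. For the second assertion the delicate point is the limit stage of the recursion, where one must verify that the canonical ascending sequence of heights of models of \ZFC\ below $\alpha_\lambda$ --- and hence every ordinal it reaches --- is first-order definable in $\<L_{\alpha_\lambda},\in>$ without parameters, and that this goes through at \emph{every} limit, not merely the first; this is exactly where the closure properties of the model $L_{\alpha_\lambda}$ of \ZFC\ come into play.
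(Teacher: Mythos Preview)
Your plan for the first assertion is not yet a proof: you correctly identify that the whole difficulty lies in making a code $r$ for $\gamma$ definable in some $L_\delta$ \emph{without covertly using $\gamma$ as a parameter}, and you explicitly leave this ``crux'' undone. The paper sidesteps this difficulty entirely with a one-line minimality argument: if some real of $L$ failed to be definable in every countable $L_\xi$, the $L$-least such real would be definable in $L_{\omega_1^L}$ by that very description, and condensation would push this definition down to a countable $L_\xi$, a contradiction. Taking definable hulls then yields pointwise definable $L_\xi$ containing any prescribed real, hence unboundedly many.

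Your approach to the second assertion has a genuine error, and the stronger statement you are trying to prove is in fact false in general. You assert that $\sup_{\eta<\lambda}\alpha_\eta$ is never a ZFC-height because ``replacement fails there for the canonical enumeration,'' but that argument needs the domain $\lambda$ of the enumeration to be a \emph{set} in $L_{\sup}$, i.e.\ $\lambda<\sup_{\eta<\lambda}\alpha_\eta$. At closure points --- ordinals $\lambda$ with $\alpha_\eta<\lambda$ for all $\eta<\lambda$ --- the supremum equals $\lambda$ itself, the domain is not a set, and no replacement failure is forced; such $\lambda$ can perfectly well be ZFC-heights. More seriously, your transfinite induction aims to show that \emph{every} $L_{\alpha_\eta}$ is pointwise definable, but the paper explicitly observes (just before the theorem) that if $L_\alpha\prec L_\beta$ with $\alpha\neq\beta$ then $L_\beta$ cannot be pointwise definable, and such pairs among ZFC-heights below $\omega_1^L$ do occur under hypotheses compatible with the theorem's assumption (for instance whenever some countable $L_\gamma$ models ZFC together with the existence of an inaccessible). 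So the universal claim you are inducting toward is not provable, and the ``routine but slightly fiddly induction'' at limit stages cannot be completed as stated. The paper instead proves only what is needed --- unboundedly many pointwise definable $L_\alpha\models\mathrm{ZFC}$ --- by a different route: it observes that $\xi$ is definable in $L_{\alpha_{\xi+1}}$ as the order type of the set of smaller ZFC-heights, combines this with the first assertion to put any real of $L$ into the definable hull of some $L_\alpha\models\mathrm{ZFC}$, and condenses that hull to a pointwise definable $L_{\alpha_0}\models\mathrm{ZFC}$ still containing the real; varying the real forces the resulting $\alpha_0$ to be unbounded.
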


\begin{proof} The second claim is essentially \cite[thm
3.7]{Enayat2002:CountingModelsOfSetTheory}. For the first
claim, observe that every real of $L$ is definable without
parameters in some countable $L_\xi$, because the $L$-least
real $z$ not definable in any countable $L_\xi$, if any
should exist, is thereby definable in $L_{\omega_1^L}$,
which will condense to a definition of $z$ in some
countable $L_\xi$, giving a contradiction. Indeed, by
taking the definable hull of the empty set, we find that
$z$ is definable in a pointwise definable $L_\xi$. It
follows that such $\xi$ must be unbounded in $\omega_1^L$,
establishing the first claim. For the second claim, suppose
that there are unboundedly many $\alpha<\omega_1^L$ for
which $L_\alpha\satisfies\ZFC$. Observe that every
countable ordinal $\xi<\omega_1^L$ is definable without
parameters in $L_\alpha\satisfies\ZFC$, where $\alpha$ is
the $(\xi+1)^\th$ ordinal for which
$L_\alpha\satisfies\ZFC$, because this $L_\alpha$ can see
exactly $\xi$ many smaller $\beta$ for which
$L_\beta\satisfies\ZFC$, an argument that recalls the
definition of Laver functions for uplifting cardinals in
\cite{HamkinsJohnstone:ResurrectionAxioms}. Combining the
previous facts, every real $z$ of $L$ is definable in some
$L_\xi$, which is itself definable in some
$L_\alpha\satisfies\ZFC$, and so $z$ is in the definable
hull of $L_\alpha$, which condenses to a pointwise
definable $L_{\alpha_0}\satisfies\ZFC$ containing $z$.
Since $z$ was arbitrary in $L$, these $\alpha_0$ must be
unbounded in $\omega_1^L$, as desired.
\end{proof}

A model $M$ of \ZF\ is {\df $\omega$-standard} if the
natural numbers of $M$ have the standard order-type
$\omega$. More generally, $M$ is $\zeta$-standard for an
ordinal $\zeta$ if the ordinals of $M$ have a $\zeta^\th$
member, so that the well-founded initial segment of the
ordinals of $M$ has order type exceeding $\zeta$.

\begin{theorem}\label{Theorem.EveryRealInPDM}
If there are arbitrarily large $\alpha<\omega_1^L$ for
which $L_\alpha\satisfies\ZFC$, then every real in $V$ is
an element of a pointwise definable $\omega$-standard model
of $\ZFC+V=L$, whose well-founded initial segment can be as
large in the countable ordinals of $V$ as desired.
\end{theorem}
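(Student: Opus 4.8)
The plan is first to dispose of the easy case: we may assume $r\notin L$, since if $r\in L$ then for any $\gamma<\omega_1^L$ one simply takes a pointwise definable $L_\alpha\satisfies\ZFC$ with $\alpha>\gamma$ and above the constructibility rank of $r$, which exists by Theorem~\ref{Theorem.ArbLargeLalphaPointwiseDefinable}. So fix a real $r\notin L$ and a countable ordinal $\gamma<\omega_1^L$ (the hypothesis makes the $\ZFC$-ordinals cofinal in $\omega_1^L$, so handling all such $\gamma$ is what is wanted). I would build a single countable model $M$ that is $\omega$-standard, satisfies $\ZFC+V=L$, is pointwise definable, carries a code for $r$, and has well-founded part of height exceeding $\gamma$. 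I would produce such an $M$ as a model of a carefully chosen infinitary theory and then pass to its definable hull $M_0$: since $\ZFC+V=L$ supplies parameter-free definable Skolem functions --- the least-witness functions of the canonical well-order, exactly as in the proof of Theorem~\ref{Theorem.Con(ZFC)->PDM} --- the hull $M_0$ is an elementary submodel of $M$ satisfying $\ZFC+V=L$ and is still $\omega$-standard. It retains $r$ provided $r$ is parameter-free definable in $M$, and it retains all ordinals below $\gamma$ provided some definable ordinal $\delta$ of $M$ sits at or past $\gamma$ in the well-founded part; for under $V=L$ the $<_L$-least surjection $\omega\to\delta$ is definable from $\delta$, so a single such definable $\delta$ drags all of $\delta$ into $M_0$.

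For the construction I would work with an $L_{\omega_1\omega}$ theory over a countable admissible set $\mathbb{A}$ containing $r$ and $\gamma$ (say $\mathbb{A}=L_\lambda[r]$ for $\lambda$ admissible above $\gamma$), in order to apply Barwise compactness and completeness. Let the theory $T$ assert: $\ZFC+V=L$; the full elementary diagram of $r$ for a new constant $\dot r$; the sentence $\forall x\,(x\in\omega\to\bigvee_n x=\underline n)$ forcing $\omega$-standardness; for each $\xi<\gamma$ a constant $c_\xi$ with axioms making it the ordinal $\xi$ (namely ``$c_\xi$ is an ordinal'', ``$c_\eta\in c_\xi$'' for $\eta<\xi$, and $\forall y\,(y\in c_\xi\leftrightarrow\bigvee_{\eta<\xi} y=c_\eta)$), which drives $\gamma$ into the well-founded part; and the disjunction $\bigvee_\varphi$ ``$\dot r$ is the unique $x$ with $\varphi(x)$'', asserting that $\dot r$ is parameter-free definable. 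By Barwise compactness it suffices to model each $\mathbb{A}$-finite fragment of $T$, and here the hypothesis does the work: a fragment mentioning only finitely many digits of $r$ is modelled by an actual $L_\alpha\satisfies\ZFC$ with $\alpha$ a $\ZFC$-ordinal above $\gamma$ --- these being cofinal in $\omega_1^L$ --- interpreting $c_\xi$ as $\xi$ and $\dot r$ as a suitable finite, hence definable, real of $L_\alpha$ agreeing with $r$ there; and a fragment carrying the whole diagram of $r$ is modelled by an ordinary-compactness model of $\ZFC+V=L+\mathrm{diag}(r)$, allowed to be $\omega$-nonstandard since the $\omega$-standardness axiom need not occur in the same fragment. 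Barwise completeness then gives a countable $M\satisfies T$, and the Barwise construction can be arranged so that the ordinals of $\mathbb{A}$ lie in its well-founded part; with the $c_\xi$ axioms this puts $\mathrm{wfp}(M)$ past $\gamma$, while the $\omega$-standardness axiom makes the code $\dot r^M$ decode to $r$.

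The real obstacle is the last clause --- the Barwise-consistency of $T$ at fragments containing both $\mathrm{diag}(r)$ and the ``$\dot r$ is definable'' disjunction. The difficulty is intrinsic: $r$ is arbitrary, yet in any model of $V=L$ the real $r$ can enter the constructible hierarchy only at an ill-founded stage, which $M$ cannot name from within, so $r$ cannot simply be assigned a syntactically specified level. My plan here is to exploit the freedom in designing the ill-founded part of $M$: add to $T$ the axiom --- consistent with $\ZFC+V=L$, as witnessed by $L_\lambda$ for $\lambda$ the second $\ZFC$-ordinal --- that the $\ZFC$-levels are bounded, so that $W:=\bigcup\{L_\beta : L_\beta\satisfies\ZFC\}$ is a definable set, and then declare $\dot r$ to be the $<_L$-least real not in $W$. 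The Barwise-compactness verification then reduces to checking that an arbitrary $\mathbb{A}$-finite family of digit-constraints on $r$ can be honoured while placing $r$ just past $W$, which is feasible because on any finite set of integers $r$ agrees with a constructible real and because the $\omega$-nonstandard models used for the fragments leave room for a definable real whose standard part is $r$. Granting this, the definable hull of the resulting $M$ is the desired pointwise definable $\omega$-standard model of $\ZFC+V=L$ containing $r$ with well-founded part exceeding $\gamma$.
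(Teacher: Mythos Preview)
Your Barwise compactness argument has a genuine gap at the verification step. You claim that an $\mathbb{A}$-finite fragment either mentions only finitely many digits of $r$ (handled by some $L_\alpha$) or else carries the whole diagram of $r$ but omits the $\omega$-standardness axiom (handled by an ordinary compactness model). This dichotomy is false: since $r\in\mathbb{A}$, the full set $\{\,\underline{n}\in\dot r:n\in r\,\}\cup\{\,\underline{n}\notin\dot r:n\notin r\,\}$ is itself an element of $\mathbb{A}$, and so is the single infinitary sentence expressing $\omega$-standardness. Hence there is an $\mathbb{A}$-finite fragment containing $\ZFC+V=L$, the complete diagram of $r$, and the $\omega$-standardness axiom all at once. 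A model of that fragment is precisely an $\omega$-standard model of $\ZFC+V=L$ containing $r$, which is exactly what you are trying to construct. Dropping the explicit $\omega$-standardness sentence does not help: the block of $c_\xi$-axioms for all $\xi\leq\omega$ is also an element of $\mathbb{A}$ and already forces $\omega$-standardness. So Barwise compactness over $\mathbb{A}$ does not actually decompose the problem, and your later manoeuvre with $W$ and the $<_L$-least real not in $W$ inherits the same circularity: the relevant fragments still demand, up front, an $\omega$-model of $V=L$ in which the given $r$ occupies a prescribed definable slot.

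There is also an error of scope. The theorem asks for the well-founded part to exceed any prescribed countable ordinal of $V$, not merely of $L$. Your restriction to $\gamma<\omega_1^L$ is therefore insufficient; when $\omega_1^L$ is countable in $V$, even your ``easy'' case $r\in L$ must produce ill-founded models whose well-founded part reaches beyond $\omega_1^L$, and no genuine $L_\alpha$ does that.

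The paper's proof avoids all of this with a short absoluteness argument. The assertion ``every real lies in a pointwise definable $\omega$-standard model of $\ZFC+V=L$'' is $\Pi^1_2$ (for every real there exists a real coding a countable structure with an arithmetic property), it holds in $L$ by the preceding theorem, and therefore by Shoenfield absoluteness it holds in $V$. The well-founded-part clause then comes for free: bundle a code for the target countable ordinal $\zeta$ together with $r$ into a single real, place that real into such a model, and observe that an $\omega$-standard model decodes the ordinal correctly and is thus well-founded past $\zeta$.
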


\begin{proof}
The argument of theorem
\ref{Theorem.ArbLargeLalphaPointwiseDefinable} shows under
the hypothesis that the conclusion is true in $L$. We
complete the proof of the theorem by observing that the
statement that every real of $V$ is in a pointwise
definable $\omega$-standard model of $\ZFC+V=L$ has
complexity $\Pi^1_2$, since it has the form ``{\it for
every real, there is a countable structure...},'' where the
omitted portion is arithmetic, since it involves
quantification only over the elements of the structure and
the digits of the real. Thus, by Shoenfield absoluteness
this statement is absolute from $L$ to $V$ and hence is
true in $V$, as desired, even if $V$ has many more reals
and larger countable ordinals than $L$. Note that if an
$\omega$-standard model $M$ has a real $z$ coding a
countable ordinal $\zeta$, then $M$ will decode this real
correctly, and so $M$ will be well-founded beyond $\zeta$,
and the theorem is proved.
\end{proof}

\begin{corollary}\label{Corollary.EveryMextendsToPointwiseDefinableM+}
If there are arbitrarily large $\alpha<\omega_1^L$ with
$L_\alpha\satisfies\ZFC$, then every countable transitive
set $M$ is a countable transitive set inside a structure
$M^+$ that is a pointwise definable model of $\ZFC+V=L$,
and $M^+$ is well-founded beyond the rank of $M$.
\end{corollary}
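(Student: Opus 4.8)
The plan is to reduce the statement to Theorem~\ref{Theorem.EveryRealInPDM} by coding the given countable transitive set as a real and then collapsing it back inside the model produced there. First I would fix a real $z$ coding $M$: since $M$ is a countable transitive set, there is a well-founded extensional binary relation $E$ on $\omega$ (or on a subset of $\omega$) whose Mostowski collapse is exactly $M$, and $z$ is to be a real coding $E$. Let $\zeta$ be any countable ordinal of $V$ strictly above the rank of $M$, which therefore also exceeds the rank of the relation $E$. Applying Theorem~\ref{Theorem.EveryRealInPDM} to the real $z$ under the standing hypothesis, there is a pointwise definable $\omega$-standard model $M^+\satisfies\ZFC+V=L$ with $z\in M^+$ whose well-founded initial segment has order type exceeding $\zeta$; this $M^+$ will be the desired structure.

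Next I would verify that $M^+$ sees $E$ correctly and collapses it to the genuine $M$. Because $M^+$ is $\omega$-standard, any infinite descending $E$-sequence witnessed inside $M^+$ would be an actual function $\omega\to\omega$ and hence a genuine infinite descending chain through $E$, contradicting that $E$ is well founded in $V$; so $M^+\satisfies$ ``$E$ is well founded,'' and thus $M^+$ forms the Mostowski collapse of $E$ by the usual recursion $n\mapsto\{\,\mathrm{collapse}(m):m\mathrel{E}n\,\}$. Since $M^+$ is well founded beyond $\zeta$, every stage of this recursion lies inside the genuinely well-founded initial segment of $M^+$, so the computation performed inside $M^+$ agrees with the one in $V$; hence the collapse of $E$ as formed in $M^+$ is literally $M$. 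In particular $M\in M^+$, the membership relation $M^+$ attributes to $M$ is the true $\in$, $M^+$ regards $M$ as a transitive set, and $M^+$ is well founded beyond the rank of $M$. As $M^+$ is a pointwise definable model of $\ZFC+V=L$, this yields everything claimed. (There is no conflict with $M^+\satisfies V=L$ even when $M$ contains non-constructible reals of $V$: such a real sits in the well-founded part of $M^+$, but the stage of the $L$-hierarchy at which $M^+$ thinks it is constructed lies in the ill-founded part.)

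The only genuine obstacle is the absoluteness of the collapse, that is, ensuring that $M^+$'s internal version of $M$ is the actual $M$ rather than some $M^+$-internal impostor; this is precisely why one needs the well-founded part of $M^+$ pushed past an ordinal $\zeta>\mathrm{rank}(M)$, and it is exactly for this purpose that Theorem~\ref{Theorem.EveryRealInPDM} was stated so that the well-founded initial segment may be taken as tall in the countable ordinals as desired. Everything else—the existence of the coding real $z$, the absence of descending sequences in an $\omega$-standard model, and the routine bookkeeping of ranks—is straightforward.
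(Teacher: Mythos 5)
Your proposal is correct and follows essentially the same route as the paper: code $M$ by a real $z$, place $z$ in a pointwise definable $\omega$-standard model of $\ZFC+V=L$ via Theorem~\ref{Theorem.EveryRealInPDM}, and argue that $\omega$-standardness forces the model to decode $z$ and perform the Mostowski collapse correctly, so that $M$ is a transitive set of $M^+$. The only cosmetic difference is that you pre-arrange the well-founded part to exceed an ordinal $\zeta>\mathrm{rank}(M)$, whereas the paper simply derives well-foundedness beyond $\mathrm{rank}(M)$ as a consequence of the correctness of the internal collapse (which already follows from $\omega$-standardness by induction along the coded relation).
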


\begin{proof}
Code $M$ by a real $z$, and place $z$ inside a pointwise
definable $\omega$-standard model $M^+\satisfies\ZFC+V=L$.
Since $M^+$ performs the Mostowski collapse of the
structure coded by $z$ correctly, it follows that $M$ is a
transitive set in $M^+$. (In this sense, $M$ end-extends to
$M^+$.) In particular, $M^+$ is well-founded beyond the
rank of $M$.
\end{proof}

Let us notice a few things about these arguments. First,
although in $L$ we produced fully well-founded pointwise
definable models, if in theorem
\ref{Theorem.EveryRealInPDM} we were to have made that part
of the statement, then the complexity would have risen to
$\Pi^1_3$ and we would have lost the absoluteness that
allowed us to bring the statement from $L$ to $V$. And
clearly, we cannot expect to place non-constructible reals
inside well-founded models of $V=L$. Second, the hypothesis
that there are arbitrarily large $\alpha<\omega_1^L$ with
$L_\alpha\satisfies\ZFC$ follows from the simpler (but
strictly stronger) hypothesis that there is a single
uncountable transitive model of \ZFC, because one may
consider increasingly large countable elementary
substructures of the $L$ of such a model. We may omit these
hypotheses completely in the theorems above, if it is
desired only to have $\omega$-standard pointwise definable
models of $\ZFC^*+V=L$, where $\ZFC^*$ is some finite
fragment of \ZFC, rather than full \ZFC, since by
reflection and condensation there are arbitrarily large
countable $L_\alpha$ satisfying any such finite fragment
$\ZFC^*$.

The results become rather curious when there are reals in
$V$ that are not in $L$. Suppose there are sufficient
$L_\alpha\satisfies\ZFC$ and we force to collapse
$\omega_1$, with the resulting forcing extension $V[g]$,
where $g$ is a real coding $\omega_1^V$. By the argument
above, there is in $V[g]$ a countable $\omega$-standard
model $M\satisfies\ZFC+V=L$ in which $g$ exists and is
actually definable. Since $M$ has the standard $\omega$, it
interprets $g$ correctly and so the well-founded part of
$M$ exceeds $\omega_1^V$. But since $M\satisfies V=L$, it
thinks that $g$ is constructible as well as definable,
constructed at some (nonstandard) ordinal stage
$\tilde\alpha>\omega_1^V\geq\omega_1^L$!

Another interesting example arises when $0^\sharp$ exists.
This assumption implies the hypothesis of the theorem, and
so by the theorem there is a pointwise definable model $M$
of $\ZFC+V=L$, well-founded as high in the countable
ordinals as desired, in which the true $0^\sharp$ is a
member, thought by $M$ to be constructible at some stage
$\tilde\alpha$, necessarily in the ill-founded part of $M$.
Thus, the true $0^\sharp$ exists unrecognized but definable
in a model of $\ZFC+V=L$ that is well-founded far beyond
the true $\omega_1^L$! For example, the model $M$ can be
well-founded beyond a rich class of (countable) Silver
indiscernibles (or even beyond $\omega_1^V$ if one forces
to collapse as in the previous paragraph) and have the same
theory as the true $L^V$. The true ordinal indiscernibles
of $V$ become discernible in $M$, however, since it is
pointwise definable.

Corollary
\ref{Corollary.EveryMextendsToPointwiseDefinableM+} may
seem paradoxical when applied to a countable transitive
model $M$ having enormous large cardinals incompatible with
$L$. For example, perhaps $M$ has supercompact cardinals
inside it or has measures that are iterable in $V$, meaning
all iterates are well-founded; they cannot be iterable in
$M^+$, since this is a model of $V=L$. In any case,
corollary
\ref{Corollary.EveryMextendsToPointwiseDefinableM+} already
implies that every countable model of \ZFC\ or \GBC\ admits
an end-extension to a pointwise definable model of set
theory. In theorems \ref{Theorem.PDForcingExtension} and
\ref{Theorem.NGBC}, however, we shall find pointwise
definable extensions with the same ordinals.

Let us now turn to the question of the extent to which
definability is first-order expressible, by making a number
of observations that illustrate the range of possibility.
We have already noted that the property of a model being
pointwise definable is not first-order expressible, since
it is not preserved by nontrivial elementary extensions.
Since pointwise definability is a strong generalization of
the axiom $V=\HOD$, it is tempting to introduce such
notation as $V=D$ or $V=HD$ to express that a model is
pointwise definable, thereby maintaining a parallel to the
classical $V=\HOD$ notation while emphasizing that the
definitions need no parameters. We hesitate to adopt this
notation, however, because we fear it would incorrectly
suggest that the concept is first-order expressible, which
isn't the case.

(i) {\it There is no uniform definition of the class of
definable elements.} Specifically, there is no formula
$\mathop{\rm df}(x)$ in the language of set theory that is
satisfied in any model $M\satisfies\ZFC$ exactly by the
definable elements. The reason is that if $M_0$ is
pointwise definable and $M_0\elesub M$ is a nontrivial
elementary extension, then the definable elements of $M_0$
and $M$ are precisely the elements of $M_0$, and so $M_0$
should satisfy $\forall x\,\mathop{\rm df}(x)$ but $M$
would satisfy $\exists x\,\neg \mathop{\rm df}(x)$,
contrary to $M_0\elesub M$.

(ii) {\it In some models of set theory, the class of
definable elements is a definable class.} Although there is
no uniform definition of the class of definable elements,
it can sometimes happen that a model enjoys a certain
structure that allows it to see its collection of definable
elements as a definable class. For example, in a pointwise
definable model, the class of definable elements includes
every object and is therefore defined by the formula $x=x$.
See also (iv) and (v) below.

(iii) {\it In other models, the collection of definable
elements is not a class.} Consider any pointwise definable
model $M\satisfies\ZFC$, and let $N$ be an ultrapower of
     $M$ by a nonprincipal ultrafilter. The
     parameter-free definable elements of $N$ are
     exactly the elements in the range of the embedding.  If this
     collection were a class in $N$ then $N$ could reconstruct $M$ and
     realize itself as an ultrapower, which is impossible.

(iv) {\it In some models, the definable elements form a
definable class, but there is no class function
$r\mapsto\psi_r$ mapping definable elements to definitions
of them.} Suppose that $M$ is a pointwise definable model
of \ZFC. The definable elements of $M$ are all of $M$,
which is certainly a definable class in $M$. But $M$ cannot
have a function $r\mapsto\psi_r$ associating to each
element $r$ of $M$, or even to each real of $M$, a defining
formula $\psi_r$ of $r$, since such a map would reveal to
$M$ that it has only countably many objects.

(v) {\it In other models, the definable elements are a set
and there is a set definability map $r\mapsto\psi_r$.}
Suppose that $\kappa$ is an inaccessible cardinal (this
hypothesis can be reduced), and observe by a
Lowenheim-Skolem argument that there are numerous
$\gamma<\kappa$ with $V_\gamma\elesub
V_\kappa\satisfies\ZFC$. It follows that the definable
elements of $V_\kappa$ are all in $V_\gamma$ and satisfy
the same definitions there as in $V_\kappa$. Since
$V_\gamma$ is a set in $V_\kappa$, we may construct in
$V_\kappa$     the function $r\mapsto \psi_r$ that maps
every     definable element $r$ of $V_\gamma$ to the
smallest definition $\psi_r$ of it, and because
$V_\gamma\elesub V_\kappa$, this function has the same
property with respect to $V_\kappa$, as desired. The large
cardinal hypothesis can be reduced; it is sufficient to
have an $\omega$-model $M$ with some $M_0\in M$ having
$M_0\elesub M$.

(vi) {\it No model can have a {\it definable} definability
map $r\mapsto\psi_r$.} If such a map were definable, then
since there are only countably many definitions $\psi_r$,
we could easily diagonalize against it to produce a
definable real not in the domain of the map. In (v), the
map is definable from parameter $\gamma$.

The surviving content of the math-tea argument seems to be
the observation that in any model with access to a
definability map $r\mapsto\psi_r$, the definable reals do
not exhaust all the reals.

\section{Pointwise definable forcing
extensions}\label{Section.PDForcingExtensions}

Let us now move beyond the elementary methods and results
of section \ref{Section.PDM}, and prove that every
countable model $M$ of \ZFC\ has a carefully chosen class
forcing extension $M[G]$ that is pointwise definable, so
that all objects of $M$, as well as those in $M[G]$, become
definable in $M[G]$ without parameters. The forcing is
sufficiently adaptable so as to preserve all the usual
large cardinal axioms.

\begin{theorem}\label{Theorem.PDForcingExtension}
Every countable model of \ZFC\ has a pointwise definable
class forcing extension.
\end{theorem}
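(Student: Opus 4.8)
The strategy is to build a class forcing iteration over the countable model $M\satisfies\ZFC$ that does two jobs simultaneously: it forces $V=\HOD$ in the style of coding sets into the $\GCH$ pattern, and it uses the countability of $M$ to diagonalize so that the generic filter $G$ itself---and hence each initial segment of the iteration---is definable without parameters in $M[G]$. First I would reduce to the case $M\satisfies\ZFC+V=\HOD$; indeed I may as well force $V=\HOD$ first (or fold this into the iteration), so that $M$ already has a definable global well-order and definable Skolem functions. The point of working over a model of $V=\HOD$ is that every set of $M$ is coded by a set of ordinals, so it suffices to arrange that every set of ordinals of $M[G]$ becomes definable; and by the usual reflection-style bookkeeping along the iteration it suffices to make a single ``master'' class of ordinals definable, into which everything has been coded.

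The forcing itself is an Easton-support class iteration, at each stage doing a ``$\GCH$-coding'' step: at a carefully chosen cardinal $\gamma$, force either $2^\gamma=\gamma^+$ or $2^\gamma=\gamma^{++}$ according to the next bit of information to be coded, so that the global $\GCH$ pattern of $M[G]$ records a class $A\subseteq\ORD$ that codes (a well-order of) all of $M$ and all $M[G]$-names used along the way. The self-reference is handled exactly as in the proof of Theorem \ref{Theorem.TransitiveModelZFC-->TransitivePDM}: since $M$ is countable, enumerate its dense classes (equivalently, build $G$ by meeting the countably many requirements) in order type $\omega$, and interleave into $A$, right at the front, a real or class that pins down the generic. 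Concretely, I would arrange that the $\GCH$-pattern at the first block of cardinals spells out a code for $G$ restricted to a tail, in a self-referential way that is consistent because each bit only depends on finitely much already-decided generic information. The upshot is: in $M[G]$ the class $A$ is definable (read off from the $\GCH$ pattern), $A$ codes a well-order of the universe, and from $A$ one recovers every set; moreover $A$ codes $G$, so $G$ is definable, so each $V_\alpha^{M[G]}$ is definable from the ground model (definable, by the $V=\HOD$ reduction) together with a definable piece of $G$. Hence every element of $M[G]$ is definable without parameters, i.e. $M[G]$ is pointwise definable.

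For the bookkeeping to close, I need: (a) the iteration to be progressively closed / to have sufficient closure and chain condition at each stage so that it preserves $\ZFC$, cardinals, cofinalities and the $\GCH$ at the coordinates not being used for coding---this is the standard Easton-style analysis, and I would cite Simpson (as the paper signals it will) for the fact that such $\GCH$-coding class forcing is well-behaved and definable; (b) the requirement list to enumerate, in order type $\omega$, both the dense subclasses of the whole iteration (so $G$ is genuinely generic over $M$, using countability of $M$) and the names of all sets of ordinals of $M[G]$ (so everything gets coded); a standard catch-your-tail argument reconciles these, since names at stage $\alpha$ can be coded at stages above $\alpha$. The step I expect to be the main obstacle is (b) together with the self-reference: making $G$ itself definable requires that the instructions for the generic at a given cardinal depend only on the already-constructed part of $G$ (so the diagonal construction doesn't chase its own tail incoherently), while simultaneously the coding of names into $A$ must not destroy the genericity requirements already met; threading these two bookkeeping tasks through a single Easton iteration, and verifying that the resulting class forcing is tame enough (definable, preserves $\ZFC$, the recovery of $A$ from the $\GCH$ pattern is itself first-order) is where the real work lies. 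The large-cardinal preservation claim then follows because $\GCH$-coding forcing of this kind can be designed with enough ``gaps'' and closure to be absorbed by standard lifting arguments (Laver-style or surgery arguments for supercompacts, etc.), which I would indicate rather than carry out.
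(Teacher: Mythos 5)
Your plan reproduces several ingredients of the paper's proof (Easton-style coding of information into the \GCH\ pattern, generic bookkeeping so that every set of ordinals of the extension gets coded, progressive closure to preserve \ZFC\ and large cardinals), but it is missing the one idea that actually produces pointwise definability, and the device you substitute for it does not work. Making a ``master'' class $A$ definable in $M[G]$ with every set coded into it only yields that every set is definable from ordinal parameters, a $V=\HOD$-style conclusion; to remove the parameters you must also make every ordinal of $M$---equivalently every element of $M$, and in particular the positions at which the codes sit inside $A$---definable without parameters. Your conclusion ``from $A$ one recovers every set\dots hence every element of $M[G]$ is definable without parameters'' silently skips exactly this step, and likewise ``each $V_\alpha^{M[G]}$ is definable'' uses $\alpha$ as a parameter. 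The paper gets this from Simpson's theorem: before any \GCH\ coding, one builds a single generic class $U$ for $2^{<\ORD}$ by interleaving, in an external $\omega$-sequence, minimal-length extensions into the dense classes (enumerated so that the parameters of the $n$-th dense class are among the previously coded $a_j$, $j<n$) with concatenated codes $\bar a_n$ carrying definable end-markers; an induction on $n$ then shows that each block, hence each $a_n$ and each ordinal, is definable in $\langle M,{\in},U\rangle$. After that one only needs $U$ (not the later generics) to be definable in the final model, which the coding at the $\delta_\alpha$ provides, and forcing $V=\HOD$ finishes the argument. Your proposal cites Simpson only for routine well-behavedness of class forcing, so this mechanism is absent from it.

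The substitute you propose---that ``the \GCH-pattern at the first block of cardinals spells out a code for $G$ restricted to a tail,'' so that $G$ itself becomes definable---is both unnecessary and unworkable. It is unnecessary because the paper never defines its generic filters $G$ and $H$ in the extension; pointwise definability follows from ``every ordinal is definable'' together with $V=\HOD$ in the final model. It is unworkable because $G$ is a proper class: a set-sized initial block of the \GCH\ pattern cannot code a class-sized tail, and the circularity is not dissolved by saying each bit depends only on ``finitely much already-decided generic information''---in any construction of the generic the early coordinates are fixed before the class-length tail exists, so they cannot anticipate it. Two smaller points: forcing $V=\HOD$ over $M$ first does essentially no work, since the subsequent class forcing adds sets and can destroy $V=\HOD$; the paper instead forces $V=\HOD$ at the end (the lottery iteration at the $\gamma_\alpha$) precisely so that the newly added sets are ordinal definable. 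And the appeal to the proof of theorem \ref{Theorem.TransitiveModelZFC-->TransitivePDM} is misplaced: there pointwise definability was obtained by passing to the definable hull, a submodel, a move unavailable here because the theorem requires the forcing extension of the given $M$ itself to be pointwise definable.
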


After proving this theorem (on a New York City subway
platform), we came later to learn of earlier work achieving
it. The introduction of
\cite{David1982:SomeApplicationOfJensensCodingTheorem}, for
example, claims the result in the special case of countable
transitive models as a corollary to its main theorem, but
the article unfortunately provides little further
explanation.\footnote{The main theorem of
\cite{David1982:SomeApplicationOfJensensCodingTheorem},
using Jensen coding, is that every transitive model of set
theory has a class forcing extension in which $V=L(r)$ for
a real $r$, such that $L_\alpha(r)\not\satisfies\ZF$ for
every ordinal $\alpha$. Since this situation is captured by
a first-order theory which can hold in uncountable models,
however, it cannot by itself imply pointwise definability;
David apparently had in mind an appeal to further details
of the proof. We note that in comparison to Jensen coding,
our forcing is mild: it is progressively closed and can be
made to preserve any of the usual large cardinals.} Also,
Ali Enayat in remark 2.8.1 of
\cite{Enayat2005:ModelsOfSetTheoryWithDefinableOrdinals}
briefly suggests a proof method very similar to our proof
of theorem \ref{Theorem.PDForcingExtension}, although he
does not state the result as a formal theorem. Most of
Enayat's excellent paper is concerned with the {\df Paris}
models, models of \ZF\ in which every ordinal is definable
without parameters, but in several instances he achieves
this by establishing pointwise definability. One of his
main results is \cite[theorem
2.19]{Enayat2005:ModelsOfSetTheoryWithDefinableOrdinals},
which asserts that if there is an uncountable transitive
model of \ZFC, then for every infinite cardinal $\kappa$,
there is a Paris model of \ZF\ having size $\kappa$. These
are very large models, but have only countably many
ordinals, since each ordinal is definable without
parameters.

Our proof of theorem \ref{Theorem.PDForcingExtension} is
based on a technique of Simpson
\cite{Simpson1974:ForcingAndModelsOfArithmetic}, an early
application of forcing to the study of models of
arithmetic. Namely, Simpson proved that every countable
model $M$ of \PA\ or \ZFC\ has an amenable class $U$
(i.e.~for any $x\in M$, $x\intersect U\in M$) with the
property that every element of the expanded structure
$\<M,U>$ is definable without parameters. Simpson uses
forcing to define a new generic proper class $U$ which
codes every element of the universe. The original structure
is expanded by adding a predicate for this new class,
producing a pointwise definable model in the larger
language. Our strategy will be simply to follow this
expansion by further forcing that codes $U$ and the new
generic filter into the first-order structure of the model,
thereby eliminating the need for the additional predicate
$U$ and producing a pointwise definable model in the
original pure language of set theory. In other words, we
will show that every countable model of \ZFC\ can be
extended to a pointwise definable model of \ZFC, a model in
which every set is definable without parameters.

We begin with a brief review of Simpson's
\cite{Simpson1974:ForcingAndModelsOfArithmetic} result.
Although his main result concerned models of PA, he
concludes his paper with a \ZFC\ version of the theorem,
and the proof given below is a straightforward adaptation
of his argument to the \ZFC\ context.

\medskip
\begin{theorem}[Simpson]\label{Theorem.Simpson}
Let $\<M,\in>$ be a countable model of \ZFC.  Then, there
is a class $U\subseteq M$ such that:
\begin{enumerate}
    \item[(i)] $\<M,{\in},U>$ satisfies \ZFC\ in the
        language with a predicate for $U$.
    \item[(ii)] Every element of $M$ is first-order
        definable in $\<M,{\in},U>$.
\end{enumerate}
\end{theorem}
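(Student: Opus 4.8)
The plan is to adapt Simpson's original forcing argument for models of \PA\ to the \ZFC\ context, following the outline he sketched at the end of \cite{Simpson1974:ForcingAndModelsOfArithmetic}. First I would set up the forcing notion inside $M$: conditions will be partial functions $p$ with domain a bounded initial segment of the ordinals of $M$ and with values in $2$, so that a generic filter determines a class $U\subseteq\ORD^M$ which we think of as a binary sequence of ordinal length. The key design requirement is that $U$ should, along suitably spaced blocks of coordinates, encode a chosen surjection from $\ORD^M$ onto $M$. Because $M$ is a model of \ZFC, it has a definable pairing and a global well-ordering is not available in general, but one can fix any bijection in $M$ between $M$ and $\ORD^M$ (using a rank function together with, on each level, a well-ordering of $V_\alpha$ --- or more simply, just force with set-sized approximations to an arbitrary such bijection, since $M$ is countable we do not need definability of the bijection, only that the generic class $U$ exists meeting the relevant dense sets). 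The generic $U$ will then ``write down'' for every set $x\in M$ the ordinal that codes it, in a way that $\<M,{\in},U>$ can read off uniformly.

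Second I would verify condition (i), that $\<M,{\in},U>\satisfies\ZFC$ in the expanded language. This is the standard point that a generic for a sufficiently homogeneous, set-complete (in fact, the forcing can be taken to be ${<}\kappa$-closed on a tail for every $M$-cardinal $\kappa$, so it adds no new sets) class forcing preserves \ZFC, including Replacement and Collection for formulas mentioning the new predicate $U$; here amenability of $U$ --- that $x\cap U\in M$ for every $x\in M$ --- is exactly what makes the forcing ``tame'' enough for the usual class-forcing preservation lemmas to apply, and amenability itself follows because each condition is a set and the generic restricted to any set-sized interval is decided by a set of conditions inside $M$. Since $M$ is countable, a generic filter meeting all the (countably many) dense subclasses that are definable over $\<M,{\in}>$ exists in $V$, and that suffices.

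Third, and this is where the real content lies, I would establish (ii): every element of $M$ is first-order definable in $\<M,{\in},U>$ without parameters. The generic class $U$ codes a surjection $\ORD^M\to M$, so every $x\in M$ is of the form ``the object coded by the $\alpha^{\text{th}}$ block of $U$'' for some ordinal $\alpha$; thus it suffices to show every ordinal of $M$ is definable in $\<M,{\in},U>$. For this one uses a genericity/density argument: by meeting appropriate dense sets, one arranges that $U$ also codes, on a reserved set of coordinates, enough first-order diagram information about itself so that each ordinal $\alpha$ becomes the unique ordinal with a describable local pattern in $U$ --- concretely, one can force so that $\alpha$ is definable as, say, the least ordinal above which $U$ has a block of exactly $\alpha$-many consecutive $1$'s framed by a recognizable marker, or an analogous self-referential coding scheme. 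The main obstacle is getting this bootstrapping right: one must ensure the coding of ordinals-by-patterns and the coding of sets-by-ordinals are laid out on disjoint, recognizable portions of $U$ and that the density arguments guaranteeing ``every ordinal eventually gets its defining pattern'' are genuinely dense (not blocked by earlier commitments), which requires a careful simultaneous bookkeeping of the two coding tasks. Once ordinals are definable, definability of arbitrary $x\in M$ follows immediately since $U$ tells $\<M,{\in},U>$ which ordinal codes $x$. This is precisely the straightforward adaptation of Simpson's \PA\ argument, replacing ``number'' by ``ordinal'' and using that \ZFC\ proves every set has an ordinal rank and a coding by a set of ordinals.
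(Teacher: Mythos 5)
Your setup matches the intended argument---forcing with conditions of ordinal length over the countable model $M$, using closure of the forcing to preserve \ZFC\ in the expanded language, and coding elements of $M$ into the generic class---but the step that carries all the weight, namely how each individual object ends up \emph{parameter-free} definable in $\langle M,{\in},U\rangle$, is not achieved by what you describe. Your concrete scheme, ``$\alpha$ is the least ordinal above which $U$ has a block of exactly $\alpha$-many consecutive $1$'s framed by a marker,'' is circular: it is a definition of a position \emph{from} the parameter $\alpha$, not a definition of $\alpha$ from nothing. More generally, no density/genericity argument by itself can yield pointwise definability of the expansion: meeting, for each ordinal $\alpha$, a dense class of conditions that code $\alpha$ somewhere only guarantees that $\alpha$ is coded at \emph{some} location, and that location is not definable without already knowing $\alpha$ or the history of the construction. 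Indeed (as the paper stresses right after this theorem), an arbitrary $M$-generic $U\subseteq 2^{<\ORD}$ need \emph{not} make $\langle M,{\in},U\rangle$ pointwise definable---take an internal ultrapower, or a generic over an uncountable model later collapsed---so definability must come from a deliberately non-generic feature of how $U$ is built, not from extra dense sets.

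The missing mechanism is an external recursion that interleaves the two tasks so that the construction itself can be recovered definably, by induction, inside $\langle M,{\in},U\rangle$. Enumerate $M=\{a_n : n<\omega\}$ externally, fix in $M$ codes $\bar a_n\in 2^{\alpha}$ with recognizable end-markers, and enumerate the parametrically definable dense subclasses $D_n$ of $2^{<\ORD}$ arranged (by padding) so that the parameters of the defining formula $\varphi_n$ of $D_n$ lie among $\{a_j: j<n\}$. Build $p_0\subseteq p_1\subseteq\cdots$ by alternately taking the \emph{minimal-length} extension of the current condition lying in $D_n$, and then concatenating the code $\bar a_n$; set $U=\bigcup_n p_n$. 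Genericity gives (i), and the induction for (ii) works because $p_{2n+1}$ is definable as the least initial segment of $U$ extending $p_{2n}$ satisfying $\varphi_n$ (whose parameters $a_j$, $j<n$, are already defined), $p_{2n+2}$ is definable via the end-marker, and from these one reads off $\bar a_n$ and hence $a_n$. Your proposal gestures at ``bootstrapping'' and ``bookkeeping'' but never supplies this grounding of the induction---in particular the coordination that each dense class's parameters be among the \emph{previously coded} elements, and that dense classes be met in a canonically minimal way---and without it neither the ordinals nor the blocks of $U$ you reserve for coding are definable without parameters.
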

\begin{proof}
Let us begin by enumerating the countable structure
$M=\set{a_n\st n<\omega}$. Using the axiom of choice in
$M$, it easy to see that every set $a_n\in M$ is coded in
$M$ by a binary sequence, say $\bar a_n\in 2^\alpha$, for
some ordinal $\alpha\in M$. Indeed, we can arrange things
in such a way so that all of the relevant coding is done
only on the even digits of $\bar a_n$, while all odd digits
are $0$, except that the sequence $\bar a_n$ ends with two
consecutive 1's immediately following a limit ordinal.
These restrictions on codes will be useful when we
concatenate many such sequences.

Consider the class forcing in $M$ to add a Cohen class of
ordinals, that is, the partial order $\Q=2^{<\ORD}$
consisting of all ordinal length binary sequences, ordered
by extension. Since $M$ is countable, we may enumerate the
dense subclasses $\<D_n\st n<\omega>$ of $\Q$ that are
definable in $M$ from parameters. Let us suppose $D_n$ is
defined by the formula $\varphi_n(x)$, using parameters
from $M$, and by padding if necessary, we may assume that
the parameters used in $\varphi_n$ are among $\{a_j\st
j<n\}$.

We now construct a certain $M$-generic filter for $\Q$ that
will meet all the dense sets $D_n$ while simultaneously
coding all the elements $a_n$ of $M$. Specifically, we
recursively define a descending sequence of conditions
$\<p_n\st n<\omega>$ in $\Q$, beginning with
$p_0=\emptyset$. At odd stages, let $p_{2n+1}$ be a
minimal-length extension of $p_{2n}$ such that $p_{2n+1}
\in D_n$. At even stages, let $p_{2n+2}={p_{2n+1}}\concat
\bar a_n$, the sequence obtained by concatenating the code
$\bar a_n$ to the end of $p_{2n+1}$. Finally, let
$U=\bigcup_{n\in\omega}p_n$. This is the union of the
filter consisting of the initial segments of $U$, which by
construction is $M$-generic for $\Q$, since we met each
dense class $D_n$. The forcing $\Q$ is $\kappa$-closed for
every $\kappa$ in $M$, and so it follows by standard
class-forcing arguments that we retain \ZFC\ in the
language with $U$ in the structure $\<M,{\in},U>$.

A straightforward inductive argument now shows that every
$a_n$ and $p_n$ is definable in this structure
$\<M,{\in},U>$. To see this, suppose that $p_{2n}$ and
$a_j$ for $j<n$ have already been defined. Then, $p_{2n+1}$
is simply defined as the least initial segment $q$ of $U$
extending $p_{2n}$ such that $M\models \varphi_n(q)$.
Notice that $\varphi_n$ uses only parameters from $a_j$ for
$j<n$, which have already been defined, so this definition
can be carried out without the use of any parameters. Now,
we define $p_{2n+2}$ as the least initial segment $q$ of
$U$, extending $p_{2n+1}$, that ends in two consecutive 1's
following a limit ordinal. Given $p_{2n+1}$ and $p_{2n+2}$,
it is a trivial matter to define $\bar a_n$, and $a_n$ is
easily defined as the unique set coded by $\bar a_n$. In
this fashion, every element of the universe $M$ can be
given a first-order parameter-free definition in the
structure $\<M,{\in},U>$.
\end{proof}

In the proof of theorem \ref{Theorem.Simpson}, we took care
in the selection of $U$ that the augmented structure
$\<M,{\in},U>$ would be pointwise definable. Let us briefly
argue that such extra care is indeed required in general,
for not every generic filter $U\of 2^{\lt\ORD}$ over a
countable structure $M$ need give rise to a pointwise
definable structure $\<M,{\in},U>$. One easy way to see
this is simply to let $\<N,{\in^*},U^*>$ be an internal
ultrapower of $\<M,{\in},U>$ by a nonprincipal ultrafilter
on $\omega^M$. Since the definable elements must be in the
range of this map, it follows that $\<N,{\in^*},U^*>$ is
not pointwise definable. But by elementarity, $U^*$ is
$N$-generic for the forcing $2^{\lt\ORD}$ in $N$.
Alternatively, one may construct transitive counterexamples
by starting with an uncountable $\<M,{\in}>\satisfies\ZFC$,
whose generic expansions $\<M,{\in},U>$, obtained by
forcing over $V$, remain uncountable and therefore not
pointwise definable, but further forcing over $V$ can make
them countable, while preserving the $M$-genericity of $U$.
So extra care is indeed required. In general, of course, if
one starts with a pointwise definable structure $M$, then
any expansion $\<M,{\in},U>$ of it will remain pointwise
definable, and indeed, for this one only needs that $M$ is
a Paris model (where every ordinal is definable without
parameters), since generically $U$ will list every set of
ordinals of $M$ and we will be able to define the beginning
and ending points in $U$ of a code for any desired set.

We now use Simpson's result to prove theorem
\ref{Theorem.PDForcingExtension}, where the pointwise
definable structure is not obtained in an {\it expansion}
of the original model $M$, by adding extra predicates to
the language, but rather in an extension of $M$, obtained
by forcing. Thus, we enlarge $M$ to a structure $M[G]$ in
which every object is definable without parameters in the
pure language of set theory.

\begin{proof}[Proof of theorem
\ref{Theorem.PDForcingExtension}] Let $\<M,{\in}>$ be a
countable model of \ZFC. By forcing over $M$ if necessary,
we may assume without loss of generality that
$\<M,{\in}>\models\GCH$. By theorem \ref{Theorem.Simpson},
there is an amenable class $U\subseteq M$ so that every
element $a\in M$ is first-order definable without
parameters in $\<M,{\in},U>$, which satisfies \ZFC\ in the
language with $U$. We now rid ourselves of the extra class
$U$ by forcing to code it into the structure of the
universe. Specifically, consider in $M$ the definable
sequence of uncountable regular cardinals
$\delta_\alpha=\aleph_{\omega\cdot\alpha+1}$, conveniently
separated by large gaps that will avoid interference, and
let $\P=\Pi_{\alpha\in
U}\Add(\delta_\alpha,\delta_\alpha^\plusplus)$ be Easton's
forcing, which forces failures of the \GCH\ precisely at
the cardinals $\delta_\alpha$ for $\alpha\in U$. If
$G\of\P$ is $M$-generic, then it is well known that
$M[G]\satisfies\ZFC$ and all cardinals and cofinalities are
preserved. It follows that the map
$\alpha\mapsto\delta_\alpha$ is definable in $M[G]$, and so
$U$ is definable in $M[G]$ by the equivalence $\alpha\in U$
if and only if $2^{\delta_\alpha}>\delta_\alpha^\plus$.

We shall now perform further forcing to ensure $V=\HOD$ in
a further extension $M[G][H]$, by coding into the \GCH\
pattern at the regular cardinals
$\gamma_\alpha=\aleph_{\omega\cdot\alpha+5}$, which sit
conveniently in the gaps of the $\delta_\alpha$ sequence,
again avoiding interference. Specifically, let $\R$ be the
Easton support class-length forcing iteration, which at
stage $\alpha$ generically decides either to force with
trivial forcing, which will preserve the \GCH\ at
$\gamma_\alpha$, or with
$\Add(\gamma_\alpha,\gamma_\alpha^\plusplus)$, which will
violate the $\GCH$ at $\gamma_\alpha$. That is, the stage
$\alpha$ forcing is the lottery sum
$\{\bar\one\}\oplus\Add(\gamma_\alpha,\gamma_\alpha^\plusplus)$,
which is the disjoint union of these partial orders joined
by a new common upper bound $\one$, so that any generic
filter picks exactly one factor and forces with it.
Since the forcing at stage $\alpha$ is
$\ltgamma_\alpha$-closed, we may nicely factor the
iteration $\R$ at arbitrarily large cardinals as set
forcing followed by highly closed tail forcing, and so in
the terminology of \cite{Reitz2006:Dissertation} it is
progressively closed and therefore preserves \ZFC. Note
also that $\R$ preserves all cardinals and cofinalities
over $M[G]$. To see that $\R$ forces $V=\HOD$, we observe
simply that every set of ordinals will eventually be coded
into the \GCH\ pattern on the coding points
$\gamma_\alpha$, because we can extend any condition to a
stronger condition that opts for the trivial or nontrivial
side of the forcing in the correct pattern so as to code
that set on an interval. Another way to say it is that the
necessary bookkeeping is performed generically by a density
argument. Specifically, suppose that $p\in\R$ and $\tau$ is
a $\R$-name such that $p\forces\tau\of\check\gamma$ for
some ordinal $\gamma$. Let $\beta$ be large enough so that
it is beyond the support of $p$, beyond $\gamma$ and also
large enough so that $\tau$ is a $\R_\beta$-name. Extend
$p$ to a stronger condition $q$, which for $\xi<\gamma$
opts for trivial forcing at stage $\beta+\xi$ with the same
Boolean value as $\boolval{\check\xi\in\tau}$ and for
nontrivial forcing at stage $\beta+\xi$ with the Boolean
value of $\boolval{\check\xi\notin\tau}$. That is,
$q(\beta+\xi)$ is an $\R_{\beta+\xi}$-name for the
condition that chooses one way or the other in the lottery
sum at that stage, depending on whether
$\xi\in\tau_{H_\beta}$, where $H_\beta$ is the generic
filter up to stage $\beta$. The condition $q$ therefore
forces that the set named by $\tau$, whatever it is, will
be coded into the \GCH\ pattern of the $\gamma_{\beta+\xi}$
for $\xi<\gamma$. Thus, if $H\of\R$ is $M$-generic, every
set of ordinals in $M[G][H]$ will be ordinal-definable, and
since every set can be easily coded by a set of ordinals,
it follows that $M[G][H]\satisfies V=\HOD$.

Let us now complete the argument. Since cardinals were
preserved, it follows that the map
$\alpha\mapsto\delta_\alpha$ is definable in $M[G][H]$.
Since the $\R$ forcing only affects the \GCH\ pattern at
the $\gamma_\alpha$, it does not upset the coding we did in
$M[G]$ at the $\delta_\alpha$, and so the class $U$ remains
definable in $M[G][H]$ as the class of $\alpha$ for which
the \GCH\ fails at $\delta_\alpha$. Note that the class $M$
is definable from $U$ in $M[G][H]$ as the class of sets
coded by a subsequence of $U$. Since $\<M,{\in},U>$ is
pointwise definable, it now follows that every element of
$M$ is definable in $M[G][H]$ without parameters. In
particular, every ordinal is definable in $M[G][H]$ without
parameters. Since $M[G][H]$ is a model of $V=\HOD$, this
implies that every element of $M[G][H]$ is definable
without parameters.
\end{proof}

One can easily combine the two steps of forcing in the
proof of theorem \ref{Theorem.PDForcingExtension} by
interleaving them into one forcing iteration, which codes
$U$ into the continuum pattern at $\delta_\alpha$ and holds
lotteries at $\gamma_\alpha$, leading in the same way to a
forcing extension satisfying $V=\HOD$ in which $U$ is
definable. We chose for clarity to separate the coding into
two steps.

The argument can be easily modified to use other coding
methods. For example, by using the $\Diamond^*$ coding
method as in
\cite{Brooke-Taylor2009:LargeCardinalsAndDefinableWellOrders}
one may also achieve \GCH\ in the final pointwise definable
model.

Note also that the forcing of theorem
\ref{Theorem.PDForcingExtension} is very mild from the
large cardinal perspective, for we first add a Cohen
generic class of ordinals, and then code it into the \GCH\
pattern while also forcing $V=\HOD$. These iterations are
extremely nice, progressively closed Easton support
iterations and can be made to have increasingly large gaps
where no forcing occurs. Thus, they can easily be made to
preserve any of the usual large cardinal notions. One may
therefore find pointwise definable forcing extensions while
preserving one's favorite large cardinals. For example, if
$M$ has a supercompact cardinal $\kappa$, one should first
make $\kappa$ Laver-indestructible, and then do all the
coding above $\kappa$, which will preserve the
supercompactness of $\kappa$ since it is
$\ltkappa$-directed closed.

\section{Extending the result from \ZFC\ to \GBC}
\label{Section.NGBC}

For the main contribution of this article, we should like
now to extend the result from \ZFC\ to G\"odel-Bernays
\GBC\ set theory, which is a natural context for pointwise
definability since it allows for a precise formal treatment
of classes in set theory and many of the pointwise
definability arguments have involved augmenting a model of
set theory with additional proper classes, such as those
arising with class forcing. We refer the reader to
\cite{Jech:SetTheory3rdEdition} or \cite[p.
225--86]{Mendelson1997:IntroductionToMathLogic} for an
overview of G\"odel-Bernays set theory. Here, we regard a
\GBC\ model of set theory as a triple $\calM=\<M,S,{\in}>$,
consisting of a model $\<M,{\in}>$ of \ZFC\ augmented with
a family $S\of P(M)$, whose members are the {\df classes}
of $\calM$, satisfying the \GBC\ axioms. The \GBC\ axioms
extend the \ZFC\ axioms to this two-sorted second-order
context by allowing formulas in the replacement and
separation axioms to make use of finitely many class
parameters (but not to quantify over classes, a
strengthening that constitutes Kelly-Morse set theory). In
particular, if $\vec X$ is any finite list of classes from
$S$, then the expansion $\<M,{\in},\vec X>$, obtained by
interpreting the classes in $\vec X$ as predicates over
$M$, satisfies $\ZFC(\vec X)$, the version of \ZFC\ in
which the classes of $\vec X$ may appear as atomic
predicates in the replacement and separation axioms
schemes, and furthermore, any class definable in
$\<M,{\in},\vec X>$ must be in $S$. In addition, \GBC\
includes a global version of the axiom of choice, meaning
that there is a single class choice function $F\in S$ such
that $F(x)\in x$ for every nonempty set $x$.

Although the \GBC\ axioms admit an elegant and economical
formulation purely in terms of classes, as every model of
\GBC\ is determined entirely by its collection of classes,
we prefer the two-sorted set-class formulation here so as
to emphasize the connection with models of \ZFC. In
particular, any model $\<M,{\in}>$ of \ZFC\ can be extended
to a model $\<M,S,{\in}>$ of \GBC\ by first finding an
$M$-generic filter $G\of\Q$ for the class forcing
$\Q=M^{\lt\ORD}$ to add a global well-ordering of the
universe---a forcing notion that is $\kappa$-closed for
every cardinal $\kappa$ and hence adds no sets---and then
letting $S$ consist of the classes definable in
$\<M,{\in},G>$ from parameters; the forcing to add $G$
ensures global choice in $\<M,S,{\in}>$, and is unnecessary
if $\<M,{\in}>$ already has a global choice class function.
It follows that \GBC\ is conservative over \ZFC\ in the
sense that any statement purely about sets that is provable
in \GBC\ is also provable in \ZFC.

The method of forcing works over models of \GBC\ just as it
does over models of \ZFC. In particular, if $\P$ is a
partial order in a model $\calM=\<M,S,{\in}>$ of \GBC\ and
$G\of\P$ is $\calM$-generic, then we may define
$\calM[G]=\<M[G],S[G],{\in}>$, where $M[G]$ and $S[G]$
consist of interpreting in the usual way via $G$ the sets
and classes that are $\P$-names in $\calM$. In the case of
set forcing, where $\P\in M$, the analogues of the basic
\ZFC\ forcing lemmas hold also for \GBC, and in this case
the forcing extension $\calM[G]$ continues to satisfy
$\GBC$. Class forcing, in contrast, can destroy \GBC\ just
as it can destroy \ZFC, such as by making every set
countable, to give one badly behaved example. Nevertheless,
there are large families of nicely behaved class-forcing
notions that necessarily preserve \GBC. This includes {\df
progressively closed} forcing in the sense of
\cite{Reitz2006:Dissertation}, that is, forcing notions
that factor for arbitrarily large cardinals $\kappa$ as
$\P_0*\Ptail$, where $\P_0$ is a set and $\Ptail$ is
$\ltkappa$-closed. As it happens, all of the forcing
notions necessary to carry out the constructions in this
article are progressively closed, and so the corresponding
forcing extensions will all satisfy \GBC.

The main theorem of this section is that every countable
model of G\"odel-Bernays set theory can be extended to a
pointwise definable model.

\begin{theorem}\label{Theorem.NGBC}
Every countable model of G\"odel-Bernays set theory has a
pointwise definable extension, where every set and class is
first-order definable without parameters.
\end{theorem}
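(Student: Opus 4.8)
The plan is to reduce the \GBC\ case to the \ZFC\ case from Theorem~\ref{Theorem.PDForcingExtension}, but applied to a suitably rich \emph{single} model of \ZFC\ that already carries the class structure of $\calM$ as definable-from-parameters data. Given a countable $\calM=\<M,S,{\in}>\models\GBC$, the difficulty is that $S$ is a family of proper classes, and there is no single set-sized object in $M$ coding all of them; but there is a well-understood trick for countable \GBC\ models, namely that $S$ can be coded by a single amenable predicate. More precisely, since $\calM$ has global choice, one can enumerate $S=\<X_n\st n<\omega>$ and thread the $X_n$ together into one predicate $X\subseteq M$ (say $X=\set{\<n,a>\st a\in X_n}$ under a definable pairing) with the property that $\<M,{\in},X>\models\ZFC(X)$ and each $X_n$ is definable in $\<M,{\in},X>$ from the parameter $n$. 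The first step, then, is to pass to the model $\<M,{\in},X>$, which is a countable model of set theory in the language $\set{{\in},X}$.

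The second step is to run the argument of Simpson's Theorem~\ref{Theorem.Simpson} and of Theorem~\ref{Theorem.PDForcingExtension} \emph{over this expanded language}. Simpson's construction of the amenable coding class $U$ goes through verbatim for $\<M,{\in},X>$: one enumerates the structure, codes each element by a binary sequence, and builds an $M$-generic Cohen class of ordinals meeting the dense classes definable \emph{from set and class parameters}, simultaneously threading in the codes $\bar a_n$; the padding conventions are unchanged. This yields $U\subseteq M$ with $\<M,{\in},X,U>\models\ZFC(X,U)$ and every element of $M$ definable without parameters in $\<M,{\in},X,U>$. Then one codes \emph{both} $X$ and $U$ into the \GCH\ pattern, using two further definable sequences of coding cardinals in disjoint Easton gaps (plus the lottery iteration $\R$ to force $V=\HOD$), exactly as in the proof of Theorem~\ref{Theorem.PDForcingExtension}. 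Since $U$ threads codes for every set of ordinals and $X$ threads codes for the $X_n$, after the coding the classes $X$, $U$, and hence each $X_n$, and hence $M$ itself, are first-order definable in the resulting forcing extension $M^* $, which satisfies $V=\HOD$ and in which every \emph{set} is therefore definable without parameters.

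The third step is to recover the \GBC\ model. One takes $\calM^+=\<M^*,S^+,{\in}>$, where $S^+$ is the family of classes of $M^*$ definable from parameters; since every set of $M^*$ is definable without parameters, \emph{every} parameter is eliminable, so in fact every class in $S^+$ is definable without parameters, which is exactly pointwise definability in the \GBC\ sense. Because all the forcing involved --- the Cohen class of ordinals, the Easton coding forcings, and the lottery iteration --- is progressively closed in the sense of \cite{Reitz2006:Dissertation}, $\calM^+\models\GBC$. Finally one checks that the original classes $X_n$ of $\calM$ are recovered as classes of $\calM^+$ (they are definable from $X$, which is definable), so $\calM^+$ genuinely extends $\calM$: the sets of $M$ are sets of $M^*$, the classes of $S$ are classes of $S^+$, and nothing of $\calM$ is lost.

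The main obstacle I anticipate is \textbf{not} the forcing bookkeeping --- that is a routine adaptation of Theorem~\ref{Theorem.PDForcingExtension} to the two-class language --- but rather the very first step: arranging the amenable coding $X$ of the class family $S$ so that $\<M,{\in},X>$ still satisfies full $\ZFC(X)$ (equivalently, \GBC-style replacement and separation with $X$ as a predicate) and so that the coding respects the \GBC\ choice structure. One must be careful that threading $\omega$-many proper classes into one predicate does not violate amenability or replacement in a way that breaks the downstream class forcing; this is presumably where the quoted result of S.~Friedman, building on Kossak--Schmerl for \PA, enters, to guarantee that a countable \GBC\ model admits such a well-behaved single-class coding of its entire class family. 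Once that coding is secured, the remainder follows the template already established.
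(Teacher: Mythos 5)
Your overall architecture matches the paper's: first replace the family $S$ by a single class $X$ such that $\langle M,{\in},X\rangle\models\ZFC(X)$ and every class of $S$ is definable there (what the paper calls making the model \emph{principal}), and then run Simpson's theorem and the Easton/lottery coding over the expanded language to reach a pointwise definable extension; your second and third steps are essentially the paper's theorem \ref{Theorem.PrincipalGB} and are fine. The genuine gap is your first step. The naive threading $X=\{\langle n,a\rangle : a\in X_n\}$ of an external $\omega$-enumeration of $S$ provably fails to satisfy $\ZFC(X)$: since \GBC\ class comprehension puts $\ORD\setminus\alpha$ into $S$ for every ordinal $\alpha$ of $M$, the partial function $n\mapsto\min\{\beta\in\ORD : \langle n,\beta\rangle\in X\}$ is definable with the predicate $X$, has domain contained in the set $\omega^M$, and has range unbounded in $\ORD^M$, violating replacement in the language with $X$. (If $\omega^M$ is nonstandard there is an even cruder failure: the projection of $X$ onto first coordinates is essentially the standard cut, which is not a set of $M$, killing separation with $X$; and even amenability can fail, since the sequence $\langle X_n\cap V_\alpha^M : n<\omega\rangle$ need not lie in $M$.) So countability plus global choice does not buy this coding; producing such a single class is precisely the main content of the theorem, not a known preliminary trick.

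The paper supplies that missing step as theorem \ref{Theorem.FriedmanPrincipal} (Friedman, adapting Kossak--Schmerl): one builds a descending sequence of perfect trees $\Q_0\supset\Q_1\supset\cdots$ inside $2^{<\ORD}$, where $\Q_{n+1}$ is obtained from $\Q_n$ by first sealing in all $\Sigma_n$-definable dense subclasses (so every branch is $\Sigma_n$-generic) and then coding the class $A_n$ into the choices at the even splitting nodes; the unique branch $Y$ through $\bigcap_n\Q_n$ satisfies $\ZFC(Y)$ by refined forcing lemmas for partially generic filters, and each $A_n$ is recovered from $Y$ together with $\Q_n$. Note that this definability is deliberately \emph{non-uniform} (each $A_n$ needs the increasingly complex parameter-free definition of $\Q_n$), in line with the paper's warning that one should not expect all classes of $S$ to be uniformly definable from a single class by varying only set parameters --- your replacement failure above is an instance of why the uniform $\omega$-indexed version cannot work. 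You correctly sense that Friedman/Kossak--Schmerl carries the weight here, but your proposal defers that step rather than proving it, and the explicit coding you offer in its place is exactly the step that breaks.
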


Thus, the original countable \GBC\ model $\<M,S,{\in}>$ is
extended to a pointwise-definable \ZFC\ model
$\<M[G],{\in}>$, such that every set in $M[G]$ and every
class in $S$ is definable in $\<M[G],{\in}>$ without
parameters.

In order to prove this theorem, we shall first prove a
special case of it, the case where the original model
$\<M,S,{\in}>$ is {\df principal}, meaning that there is a
class $X\in S$ such that every class in $S$ is first-order
definable in $\<M,{\in},X>$ with set parameters. The
principal models of \GBC\ include many natural instances.
For example, earlier we explained that any model of \ZFC\
can be transformed into a \GBC\ model with the same sets,
by first forcing global choice and then augmenting with the
classes definable from that generic well-ordering; the
resulting \GBC\ model is principal by construction.
Furthermore, the collection of principal models of \GBC\ is
closed under forcing, even proper class forcing (provided
\GBC\ itself is preserved), since one can amalgamate the
original generating class with the newly generic class to
form a new generating class. So we have plenty of principal
models of \GBC. Nevertheless, it is also easy to construct
non-principal models, such as the weak limit of a suitable
iteration of $\omega$ many extensions $M\of M[G_0]\of
M[G_0][G_1]\of\cdots$, whose union satisfies \GBC\ but is
nonprincipal. Additional non-principal models are provided
by the fact that no model $\<M,S,\in>$ of Kelly-Morse set
theory can be principal as a \GBC\ model, since \KM\ proves
that every class $X$ admits a first-order truth class for
$\<M,\in,X>$, since for each $n$ the
$\Undertilde\Sigma_n(X)$ truth class is unique satisfying
the recursive definition of truth and these classes can be
unified into one class in \KM, but no such truth class can
be first-order definable from $X$ over $M$. An instance of
this arises from an inaccessible cardinal $\kappa$ with the
structure $\<V_\kappa,V_{\kappa+1},{\in}>$, which is a
model of \GBC\ and in fact \KM, but is easily seen not to
be principal as a \GBC\ model on cardinality grounds, since
it has $2^\kappa$ many classes but only $\kappa$ many sets
and hence only $\kappa$ many classes definable from any
fixed class. Nevertheless, this model can be made principal
by the forcing to collapse $2^\kappa$ to $\kappa$, which
adds no sets to $V_\kappa$, but which adds a generic
$\kappa$-enumeration of the ground model subsets of
$V_\kappa$.

\begin{theorem}\label{Theorem.PrincipalGB}
Every countable principal \GBC\ model has a class forcing
extension that is pointwise definable, in which every set
and class is first-order definable without parameters.
\end{theorem}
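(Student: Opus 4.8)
The plan is to run the two-step forcing of Theorem~\ref{Theorem.PDForcingExtension} not over the bare set model $\<M,{\in}>$ but over the expanded structure $\<M,{\in},X>$, where $X\in S$ is a class generating the principal model $\calM=\<M,S,{\in}>$; the principal hypothesis is exactly what will let a single generic class absorb $X$, so that $X$---and everything definable from it---reappears definably in the final $\in$-structure. Using the global well-order provided by \GBC\ I may first assume $X\subseteq\ORD^M$, and by pre-forcing with a progressively closed iteration, absorbing the new generic into $X$, I may assume $\<M,{\in}>\models\GCH$; neither step disturbs principality or \GBC.

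First I would apply Simpson's construction---the proof of Theorem~\ref{Theorem.Simpson}---to the model $\<M,{\in},X>$ of $\ZFC(X)$. This produces a class $C\subseteq\ORD^M$ that is $\calM$-generic for the progressively closed forcing $2^{<\ORD}$, meeting every dense class in $S$ precisely because $S$ is generated by $X$, such that $\<M,{\in},X,C>\models\ZFC(X,C)$, every element of $M$ is definable in $\<M,{\in},X,C>$ without parameters, and, as in Simpson's construction, $M$ is exactly the class of sets coded by subsequences of $C$. Since $2^{<\ORD}$ is $\ltkappa$-closed for every $\kappa$ it adds no sets, so $M$ is unchanged and still satisfies \GCH, and $\calM_1:=\calM[C]$ is a \GBC\ model. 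Now set $U:=X\oplus C$, a single class of ordinals coding both: then $X$, $C$, and hence $M$ are all definable from $U$ over $\<M,{\in}>$; the structure $\<M,{\in},U>$ is pointwise definable and satisfies $\ZFC(U)$, as any definition over $\<M,{\in},X,C>$ rewrites into one over $\<M,{\in},U>$; and $\calM_1=\<M,S_1,{\in}>$ is principal with generating class $U$, where $S_1$ consists of the classes definable in $\<M,{\in},U>$ with set parameters.

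Next I would carry out, over $\calM_1$, the forcing of Theorem~\ref{Theorem.PDForcingExtension} with $U$ in place of the Simpson class there: first Easton's forcing $\P=\prod_{\alpha\in U}\Add(\delta_\alpha,\delta_\alpha^\plusplus)$ with $\delta_\alpha=\aleph_{\omega\cdot\alpha+1}$, coding $U$ into the \GCH\ pattern, and then the lottery iteration $\R$ at $\gamma_\alpha=\aleph_{\omega\cdot\alpha+5}$, yielding $V=\HOD$ for sets and producing $\calM_1[G][H]=\<M[G][H],S_1[G][H],{\in}>$. Both $\P$ and $\R$ are progressively closed, so this is a \GBC\ model, all cardinals are preserved, and $\R$ leaves the $\delta_\alpha$-coding intact. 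Hence $U$ is definable without parameters in $\<M[G][H],{\in}>$, as $\{\alpha:2^{\delta_\alpha}>\delta_\alpha^\plus\}$, and so are $X$, $C$, and $M$; and since $M$ and $U$ are now $\in$-definable, each element's Simpson-definition in $\<M,{\in},U>$ relativizes to an $\in$-definition in $M[G][H]$, so every element of $M$---in particular every ordinal---is definable there without parameters, whence $V=\HOD$ makes $\<M[G][H],{\in}>$ pointwise definable. For the classes I would take the family $S'$ of all classes first-order definable over $\<M[G][H],{\in}>$ without parameters: this is closed under the \GBC\ comprehension and replacement schemes (every set parameter being definable by pointwise definability, every class parameter by fiat), contains a global well-order (the canonical \HOD-order), and contains every class of $S$ (each definable over $\<M,{\in},X>$ from a set parameter, with $M$, $X$, and the parameter all now definable without parameters). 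So $\<M[G][H],S',{\in}>$ is a pointwise definable \GBC\ model extending $\calM$, and its $\in$-reduct is a genuine class-forcing extension of $\<M,{\in}>$; indeed $S'=S_1[G][H]$, as the generic $G\ast H$ is itself recoverable without parameters in the $V=\HOD$ model $M[G][H]$ from the coding it leaves behind.

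I expect the real obstacle to be the first step. Simpson's method delivers pointwise definability of the expanded structure $\<M,{\in},X,C>$, but its defining formulas mention the predicate $X$, whereas what is wanted is a structure $\<M,{\in},U>$ with $U$ the sole auxiliary predicate---one destined to become $\in$-definable once coded into the \GCH\ pattern---that is both pointwise definable and has $M$ recoverable from $U$ alone. Folding $X$ into $U$ resolves this, but one must check that the folding is harmless for Simpson's genericity and definability arguments and that, more generally, running $2^{<\ORD}$ over $\<M,{\in},X>$---so the relevant dense classes are exactly those in $S$, definable from $X$---still yields a model of \GBC\ by progressive closure.
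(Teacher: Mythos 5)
Your proposal is correct and follows essentially the same route as the paper's own proof: run the Simpson construction over the principal structure generated by $X$, fold $X$ and the Simpson generic into a single class $U=X\oplus C$ from which every set and class of the original model is definable, and then apply the two-step coding of Theorem~\ref{Theorem.PDForcingExtension} (Easton \GCH-coding of $U$ at the $\delta_\alpha$, followed by the lottery iteration forcing $V=\HOD$ at the $\gamma_\alpha$). The worry you flag at the end---that Simpson's definitions mention the predicate $X$---is resolved exactly as you suggest, by the $X\oplus C$ folding, which is precisely what the paper does.
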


\begin{proof}
Suppose that $\calM=\<M,S,{\in}>$ is a principal countable
model of GBC. To prove the theorem we must show there is a
class forcing extension  $\<M[G],{\in}>$, satisfying \ZFC\,
such that every set in $M[G]$ and every class in $S$ is
definable in $\<M[G],{\in}>$ without parameters. The proof
will follow the proof of theorem
\ref{Theorem.PDForcingExtension}, extending the argument to
handle the issues arising on account of the second-order
part of the models. Since $\calM$ is principal, there is a
class $X\in S$ such that every class in $S$ is first-order
definable in the structure $\<M,{\in},X>$. By forcing if
necessary, we may assume $\calM\satisfies\GCH$.

The first step is to prove the analogue of Simpson's
theorem for this case. Specifically, we claim that there is
an amenable class $U$ such that $\<M,{\in},U>$ satisfies
$\ZFC(U)$ and every set and class in $\<M,S,{\in}>$ is
definable in $\<M,{\in},U>$ without parameters. The proof
is just as in theorem \ref{Theorem.Simpson}, with an extra
step. Enumerate $M=\<a_n\st n<\omega>$ and choose codes
$\tilde a_n\in 2^{\lt\ORD}$ for each $a_n$. Let
$\Q=2^{<\ORD}$ and construct an $\calM$-generic class
$U_0\subset\Q$ as in the proof of
theorem~\ref{Theorem.Simpson} so that every set in $M$ is
definable without parameters in $\<M,{\in},U_0>$. Since the
forcing is $\kappa$-closed for every cardinal $\kappa$, it
adds no sets and
$\calM[U_0]=\<M,S[U_0],{\in}>\satisfies\GBC$, where
$S[U_0]$ is the collection of classes obtained by
interpreting via $U_0$ the $\Q$-names in $\calM$. Let $U$
be the class $X\oplus U_0$, combining the two classes in
some canonical manner as a class of ordinals, and observe
that $\<M,{\in},U>$ satisfies $\ZFC(U)$, because the
forcing $\Q$ is progressively closed -- indeed, it is
$\kappa$-closed for every cardinal $\kappa$. Since $U_0$ is
definable from $U$ in $\<M,{\in},U>$, it follows that every
element of $M$ is definable in $\<M,{\in},U>$ without
parameters. And since $X$ is also definable from $U$ there,
and every class in $S$ is definable from $X$ with set
parameters, it follows that every class in $S$ is also
definable in $\<M,{\in},U>$ without parameters, as desired.

As in theorem \ref{Theorem.PDForcingExtension}, we may now
force to code the digits of $U$ into the \GCH\ pattern at
the cardinals $\delta_\alpha=\aleph_{\omega\cdot\alpha+1}$,
producing a forcing extension $M[G]$ in which $U$ is
definable without parameters. It follows that $M$ is also
definable there, since the sets in $M$ are exactly the sets
that are coded into a block of $U_0$. Next, a further
forcing extension $M[G][H]$ satisfies $V=\HOD$ by
generically coding the \GCH\ pattern at the cardinals
$\gamma_\alpha=\aleph_{\omega\cdot\alpha+5}$ as before.
Thus, every element of $M[G][H]$ is definable from ordinal
parameters, but these are definable without parameters
since every element of $M$, including every ordinal, is
definable from $M$ and $U_0$, which are definable from $U$,
which is definable without parameters in
$\<M[G][H],{\in}>$. It follows that every class in $S$ is
definable without parameters in $\<M[G][H],{\in}>$, since
$X$ is definable from $U$ there, and $U$ is definable
without parameters. So every set and class in
$\<M[G][H],{\in}>$ is definable without parameters, as
desired.
\end{proof}

To complete the proof of theorem \ref{Theorem.NGBC}, it
suffices to show that every countable \GBC\ model $\calM$
can be extended to a principal \GBC\ model. Our initial
attempts to prove this involved meta-class forcing. We
wanted to replicate the situation of the model
$\<V_\kappa,V_{\kappa+1},\in>$ in the case $\kappa$ is an
inaccessible cardinal, since this structure is
non-principal, but is made principle in the forcing
extension $V[G]$, where $G\of\Coll(\kappa,V_{\kappa+1})$,
which collapses $2^\kappa$ to $\kappa$. The generic filter
$G$ can be viewed as a subclass of $\kappa\times V_\kappa$,
whose columns are exactly the ground model subsets of
$V_\kappa$, which are exactly the classes of the original
model. In this way, the old classes are unified into one
new generic class. The forcing in this argument is
meta-class forcing, as opposed to merely class forcing,
since the individual conditions are classes in
$\<V_\kappa,V_{\kappa+1},\in>$.

A similar idea appears to work over any model of
Kelly-Morse set theory, although the details of verifying
the meta-class forcing technology are abundant. The idea is
that given a model $\<M,S,\in>\satisfies\KM$, one considers
the meta-class of conditions consisting of an ordinal
$\alpha$ and a subclass $A\of\alpha\times M$, with $A\in
S$. Stronger conditions enlarge $\alpha$ and specify
additional columns, and so the forcing is exactly analogous
to $\Coll(\ORD^M,S)$ as above. In this way, it is dense to
add any class of $S$ onto a vertical slice, and the generic
class $G\of\ORD\times M$ will unify all the classes $X\in
S$ into one class, making a principal model.

Such a meta-class forcing method, however, does not seem to
work easily over \GBC\ models, and one should not expect to
add a single generic class from which all the classes of
$S$ are uniformly definable, by varying only set
parameters.

Nevertheless, we avoid the difficulties of these
higher-order forcing arguments and complete the proof of
theorem \ref{Theorem.NGBC} by appealing to the following
remarkable observation of Sy Friedman (sketched in an email
correspondence with the first author), which shows that
every countable \GBC\ model does indeed have a principal
extension with the same sets. Kossak and Schmerl prove a
similar fact  about models of \PA\ in \cite[Theorem
6.5.6]{KossakSchmerl2006:TheStructureOfModelsOfPA}, showing
that for any model $\<M,R_0,R_1,\ldots>\satisfies\PA^*$
there is a $G\of M$ such that $\<M,G>\satisfies\PA^*$ and
each $R_i$ is definable in $\<M,G>$. It is the
Kossak/Schmerl proof method that we shall adapt to \GBC\
models here. Viewing the original Kossak/Schmerl theorem as
one about models of $\GBC^{\neg\infty}$, the theory $\GBC$
with the infinity axiom replaced by its negation (and
augmented with the assertion that every set has a
transitive closure), Ali Enayat has described theorem
\ref{Theorem.FriedmanPrincipal} here as one of the positive
instances where a model-theoretic theorem about
$\GBC^{\neg\infty}$ extends to $\GBC$; not all of them do.

\begin{theorem}[Friedman/Kossak/Schmerl]\label{Theorem.FriedmanPrincipal}
Every countable model $\calM$ of \GBC\ can be extended to a
principal model $\calM[Y]\satisfies\GBC$ while adding no
sets, only classes. Indeed, in the extension, every set and
class in $\calM$ is definable in the structure
$\<M,\in,Y>$.
\end{theorem}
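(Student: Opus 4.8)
The plan is to adapt the Kossak--Schmerl argument from \cite[Theorem 6.5.6]{KossakSchmerl2006:TheStructureOfModelsOfPA} to the \GBC\ setting. We are given a countable model $\calM=\<M,S,{\in}>$ of \GBC, and we want a single class $Y$, living in a class-forcing extension $\calM[Y]$ that adds no sets, such that $\<M,{\in},Y>\satisfies\ZFC(Y)$, every class of $S$ is first-order definable in $\<M,{\in},Y>$ with set parameters, and in fact every set and class of $\calM$ is definable there (without parameters). First I would enumerate $S=\<X_n\st n<\omega>$ and $M=\<a_n\st n<\omega>$, fixing canonical codes for the sets so the set-coding portion of the argument runs exactly as in theorem~\ref{Theorem.Simpson}. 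The key idea of Kossak--Schmerl is to build $Y$ as a single generic ``master class'' that simultaneously (a) meets every dense class of the relevant forcing definable over the expanded structures $\<M,{\in},\vec X>$ with parameters, so that \GBC\ is preserved, and (b) codes each $X_n$ into a designated region of $Y$ together with a marker that makes that region first-order locatable. The natural forcing is again a Cohen-class forcing $2^{<\ORD}$ in $\calM$, but now the dense subclasses to be met must be indexed not merely by formulas with set parameters over $\<M,{\in}>$, but by formulas with set \emph{and} class parameters drawn from $S$; since $S$ is countable, there are only countably many such, and we can fix an $\omega$-enumeration $\<D_n\st n<\omega>$ of them, arranging by padding that $D_n$ uses only class parameters among $\{X_j\st j<n\}$ and set parameters among $\{a_j\st j<n\}$.

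The construction of $Y$ then proceeds by a recursion of length $\omega$ exactly parallel to the Simpson construction: at odd stages extend minimally to enter $D_n$, and at even stages concatenate the code $\tilde X_n$ of the class $X_n$ (this is a class of ordinals, hence itself an element of $2^{\lt\ORD}$ of the model, but it has ordinal length, so here the concatenation is of a proper-class-length block --- this is fine because the forcing $2^{<\ORD}$ has conditions of every ordinal length and the $n$th block is inserted after an initial segment of some fixed ordinal length determined at stage $n$). The subtlety is that we are concatenating proper-class blocks, so I would organize the recursion so that the block coding $X_n$ is laid down on a definable interval of ordinals --- say, on the ordinals in the $n$th ``slot'' of a fixed definable partition of $\ORD$ into $\omega$ many class-sized intervals --- with the odd-stage genericity meeting handled on an auxiliary interleaved coordinate. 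Concretely, one can split $\ORD$ via $\xi\mapsto(\text{quotient},\text{remainder})$ under division by $\omega$, devoting residue class $2n$ to coding $X_n$ and residue class $2n+1$ to the genericity bookkeeping for $D_n$; then each $X_n$ is recovered from $Y$ by a uniform first-order formula (restrict $Y$ to residue class $2n$), so $X_n$ is definable in $\<M,{\in},Y>$ from the set parameter $n$, giving principality. The set-coding from Simpson's proof can be folded into, say, the even residue classes shifted, or handled on yet another coordinate; the bookkeeping here is routine once the partition is fixed.

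Preservation of \GBC\ in $\calM[Y]$ is where I would be most careful. We must check that $Y$ is $\calM$-generic for $2^{<\ORD}$ in the strong sense that it meets every dense subclass definable over $\<M,{\in},\vec X>$ from parameters in $S\cup M$ --- this is exactly what the enumeration $\<D_n\rangle$ guarantees, and it is what is needed so that for every finite tuple $\vec X\in S$ the expansion $\<M,{\in},\vec X,Y>$ satisfies $\ZFC(\vec X,Y)$, by the standard class-forcing preservation lemmas for $\kappa$-closed-for-all-$\kappa$ forcing (the forcing $2^{<\ORD}$ adds no sets, so replacement and separation with these predicates survive). Then $S[Y]$, the classes of $\calM[Y]$, consists of everything first-order definable over $\<M,{\in},Y>$ with set parameters together with the finitely-many-class-parameter closure; since every old $X_n$ and the global choice class are recoverable from $Y$, and new classes are definable from $Y$, the structure $\calM[Y]=\<M,S[Y],{\in}>$ is a \GBC\ model extending $\calM$ with the same sets, and it is principal with generator $Y$. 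Finally, definability without parameters of each set $a_n$ follows by the inductive argument of theorem~\ref{Theorem.Simpson} carried out inside $\<M,{\in},Y>$ (the parameters $a_j$, $j<n$, used in $D_n$ have already been defined), and each class $X_n$ is then definable without parameters since $n$ is.

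The main obstacle I anticipate is not conceptual but bookkeeping-technical: managing the interleaving of proper-class-length coding blocks for the $X_n$ with the proper-class-length coding blocks for the $a_n$ and with the odd-stage genericity requirements, all inside a single class $Y\of\ORD$, while keeping each component first-order recoverable from $Y$ by a \emph{uniform} formula (uniform in the index $n$), so that principality holds via a single definition scheme rather than infinitely many ad hoc ones. The residue-class partition of $\ORD$ described above is the device that makes this clean, but one must verify that the recursion can actually be carried out --- i.e.\ that at each stage $n$ there is enough ``room'' and that the minimal-extension choices at odd stages do not collide with the reserved coding intervals --- which requires noting that we may always first extend to fill the current slot with the code and then meet $D_n$ on the next genericity slot, using $\kappa$-closure to take the appropriate unions at the limit ordinals traversed. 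Once this combinatorial scaffolding is in place, the verification that \GBC\ is preserved and that every set and class is definable is a direct transcription of the \ZFC\ arguments already given.
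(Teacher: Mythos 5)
There is a genuine gap here, and it is conceptual rather than bookkeeping-technical. Your plan requires $Y$ to meet every dense subclass of $2^{\lt\ORD}$ definable over the expansions $\<M,{\in},\vec X>$ with class parameters from $S$, while simultaneously coding each $X_n$ at a fixed $M$-definable location (the residue class $2n$). These two demands are flatly incompatible: the class $D=\{p \mid \exists\xi\,(\omega\cdot\xi+2<|p| \text{ and } p(\omega\cdot\xi+2)\neq\chi_{X_1}(\xi))\}$ is a dense subclass of $2^{\lt\ORD}$ definable from the class parameter $X_1\in S$, and every condition in $D$ violates your coding of $X_1$; note also that meeting a dense class cannot be ``localized to the genericity slot,'' since conditions are initial segments of the whole sequence and membership in $D_n$ constrains the reserved coding positions as well. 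Nor does retreating to genericity only for dense classes with set parameters rescue the scheme: there are purely $M$-definable dense classes (for instance, conditions exhibiting a solid block of $0$'s on residue class $2$ along an interval of the form $[\alpha,\alpha+\alpha)$) that can never be met by a condition coding an $X_1$ containing, say, all successor ordinals. So coding at definable locations forces you to abandon full genericity, and then the standard preservation argument you invoke for $\ZFC(Y)$ (and hence \GBC) no longer applies. This is exactly the obstruction the paper flags when it says one should not expect a single generic class from which all classes of $S$ are uniformly definable by varying only set parameters---your residue-class scheme is precisely such a uniform definition. (A smaller slip: a proper class of ordinals is not an element of $2^{\lt\ORD}$, whose conditions have set length, so the ``concatenate a class-length block'' picture is not a forcing move at all; interleaving is unavoidable, and locating the interleaved coding is the whole difficulty.)

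The paper's proof is engineered around this obstruction. It builds a descending sequence of perfect trees $\Q_0\supset\Q_1\supset\cdots$ inside $2^{\lt\ORD}$, where $\Q_{n+1}$ arises from $\Q_n$ by first a fusion-style refinement ensuring that every branch meets all $\Undertilde\Sigma_n$-definable dense subclasses of $\Q_n$, and then a thinning so that the branch's choices at the even splitting nodes code $A_n$. The coding locations are therefore not $M$-definable: $\Q_{n+1}$ is definable only from $\Q_n$ and $A_n$, so their complexity grows with $n$, which is what lets the coding coexist with genericity at all. The price is that $Y=\bigcap_n\Q_n$ is only $\Sigma_n$-generic for the appropriate $\Q_{k_n}$, so $\ZFC(Y)$ cannot be quoted from the usual class-forcing theorem; the argument needs refined forcing lemmas asserting that, for formulas of bounded complexity, sufficiently (but only partially) generic filters behave correctly, and the classes $A_n$ are then recovered in $\<M,{\in},Y>$ inductively ($A_n$ from $Y$ and $\Q_n$, and $\Q_{n+1}$ from $\Q_n$ and $A_n$), not by a single uniform formula. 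None of this machinery appears in your proposal, and without it the construction as you describe it cannot be carried out.
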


\begin{proof} For clarity, let us mention up-front that the
extension $\calM[Y]$ we produce will not necessarily be a
class forcing extension of $\calM$, although it will
satisfy \GBC\ because of the close connections it has with
a sequence of increasingly partially generic extensions.
Specifically, for each $n$ we will be able to realize
$\calM[Y]$ as a $\Sigma_n$-generic extension of $\calM$ by
a partial order $\Q_n$, meaning that all
$\Undertilde\Sigma_n$ definable dense subclasses of $\Q_n$
are met by the filter, and by increasing $n$ this suffices
to capture the whole of \GBC.

Suppose that $\calM=\<M,S,{\in}>$ is a countable model of
\GBC, meaning that it has countably many sets and classes.
Enumerate the classes of ordinals in $S$ as $\<A_n\st
n<\omega>$, so that $A_n\of\ORD^M$ and every class in $S$
is first-order definable from some $A_n$.  We shall add a
class $Y$ of ordinals in such a way that $\<M,{\in},Y>$
satisfies $\ZFC(Y)$ and every $A_n$ is coded into $Y$,
hidden away by coding on increasingly difficult-to-define
subclasses of ordinals. Specifically, we will construct a
sequence of trees $\Q_0 \supset \Q_1 \supset \Q_2
\supset\cdots$, each definable but with increasingly
complex definitions. The class $Y$ will give a branch
through $\bigcap \Q_n$, and $A_n$ will be determined by the
choices that $Y$ makes within $\Q_n$.  Thus $A_n$ will be
recoverable from $Y$ and $\Q_n$, but the increasing
complexity of $\Q_n$ suggests that we should not expect a
uniform definition.

If we succeed, then it follows by induction that every
$A_n$ and hence every class in $S$, is definable in
$\<M,{\in},Y>$, whose definable classes form a principal
\GBC\ model, and so we will have proved the theorem.

The argument will rely on some refinements of the customary
general facts about class forcing, which we shall summarize
here but not prove. First, for any sufficiently
nice\footnote{The question of whether every class forcing
notion has a definable forcing relation (i.e. `is
sufficiently nice') is open.  However, a sufficient
condition which holds of a great many class forcings,
including all those appearing in this paper, is that every
subset $A\subset \P$ is contained in a complete subposet $Q
\subset_C \P$} class forcing notion $\P$, the forcing
relation $p\forces_\P\varphi$ is definable in $M$ from
$\P$; the refinement we need is that for $\varphi$ of
bounded complexity $\Sigma_n$, the forcing relation has
some bounded complexity $\Sigma_{k}(\P)$ in $M$. Second,
whenever $p\forces_\P\varphi$, then $M[G]\satisfies\varphi$
for any $M$-generic filter $G\of\P$ containing $p$; the
refinement we need is that for $\varphi$ of bounded
complexity $\Sigma_n$, there is some $k$ such that
$M[G]\satisfies\varphi$ for any $\Sigma_{k}(\P)$-generic
filter $G\of\P$ containing $p$. Third, whenever
$M[G]\satisfies\varphi$ for some $M$-generic filter
$G\of\P$, then there is a condition $p\in G$ such that
$p\forces_\P\varphi$; the refinement we need is that for
$\varphi$ of bounded complexity $\Sigma_n$, there is some
$k$ such that whenever $M[G]\satisfies\varphi$ for some
$\Sigma_k(\P)$-generic filter $G\of\P$, then
$p\forces_\P\varphi$ for some $p\in G$. These refinements
simply assert that the forcing mechanics are sound with
respect to partially but sufficiently generic filters, and
can be proved by induction by following the usual proofs of
the forcing lemmas and paying attention to the complexity
of the dense sets that arise in the arguments. It is not
relevant for our application to find the strictly optimal
relations between $n$ and $k$, although one could do this.

Now, let us return to the argument and create the generic
coding class $Y$. Let $\Q_0=\Add(\ORD,1)=2^{\lt\ORD}$ be
the class of all binary ordinal length sequences, ordered
by extension. We will construct a descending sequence of
class forcing notions
$$\Q_0 \supset \Q_1 \supset \Q_2 \supset\cdots$$
Each $\Q_n$ will be a \emph{perfect tree}, that is, a
subclass of $2^{\lt\ORD}$ that is \emph{splitting} (any
node can be extended to two incompatible nodes, though not
necessarily at the next level) and contains full limits.
There is a  canonical embedding of $2^{\lt\ORD}$ into any
perfect tree, whose image is exactly the splitting nodes
(nodes with two immediate successors) of the tree. This
embedding is dense, and therefore witnesses the forcing
equivalence of $\Add(\ORD,1)$ and any perfect tree. Thus,
all the $\Q_n$ are individually forcing equivalent to
$\Add(\ORD,1)$, which is $\kappa$-closed for every cardinal
$\kappa$ and therefore adds no new sets, while forcing
$\ZFC$ relative to the generic class.


Given $\Q_n$, we construct $\Q_{n+1}$ in two stages. First,
we will refine $\Q_n$ to a subtree with the property that
any branch will meet all $\Undertilde\Sigma_n$ definable
dense subclasses of $\Q_n$.  By increasing the length of
the stem, or portion of the tree below the first branch
point,  in this stage we will also guarantee a branch
through the intersection.  We then further refine this
subtree to obtain $\Q_{n+1}$, ensuring that any branch
through $\Q_{n+1}$ codes $A_n$.

The construction proceeds as follows.  We fix a sequence of
ordinals $\<a_n \mid n \in \omega>$ cofinal in $\calM$
(these will govern the growth of the stems of the $\Q_n$).
Given $\Q_n$, we fix an enumeration $\<D_\alpha \mid \alpha
\in \ORD>$ in $\calM$ of all $\Undertilde\Sigma_n(\Q_n)$
definable dense subclasses of $\Q_n$.  We now construct an
embedding $f:2^{\lt\ORD} \to \Q_n$.  Let $f(\emptyset)$ be
the least splitting node of $\Q_n$ which has length
exceeding $a_n$ (by `least' we mean the node of shortest
length, and if more than one such node exists we take the
least according to some fixed class well-ordering of
$\calM$ from $S$).  Given $f(p)$ where $p$ has length
$\alpha$, and fixing $e \in \{0,1\}$, we let $f(p\concat
e)$ be the least splitting node $q$ of $\Q_n$ extending
$f(p)\concat e$ such that $q \in D_\alpha$.    For $p$ of
limit length, we take $f(p)$ to be the least splitting node
extending $\Union_{p^\prime \subset p} f(p^\prime)$ and
lying in $D_\alpha$.  Let $\Q_n^*$ denote the subtree of
$\Q_n$ determined by $f$, that is, the tree consisting of
all predecessors of members of $\ran(f)$.  For each
$\alpha$, any branch through $\Q_n^*$ will necessarily meet
$\{f(p)\mid p \text{ has length } \alpha \} \subset
D_\alpha$, and so any branch will meet all
$\Undertilde\Sigma_n$ definable dense subclasses of $\Q_n$.

We now refine $\Q_n^*$ to encode $A_n$ by selecting only
those $q \in \Q_n^*$ which ``choose according to $A_n$ on
the even splitting nodes.''  That is, we select those $q$
such that, for any predecessor $q^\prime$ of $q$, if
$q^\prime$ is a splitting node of $\Q_n^*$ with exactly
$2\cdot \alpha$ many splitting nodes preceding it, then
$q^\prime \concat 1 \in q$ if and only if $\alpha \in A_n$.
Let $\Q_{n+1}$ be the subtree of $\Q_n^*$ determined by
such $q$, that is, the tree consisting of all predecessors
of such $q$.  Given $\Q_n$ together with any branch of
$\Q_{n+1}$ we can easily recover $A_n$ by comparing values
of the branch corresponding to the even splitting nodes of
$\Q_n$.  Conversely, note that $\Q_{n+1}$ was defined from
parameters $\Q_n$ and $A_n$.

This completes the construction.  We now take $Y = \bigcap
\Q_n$.  That $Y$ is nonempty and contains a single branch
follows from the fact the $\Q_n$ are nested and have stems
of increasing length unbounded in $\ORD^\calM$. The key
observation remaining is that the construction ensures that
$\<M,{\in},Y>$ satisfies $\ZFC(Y)$. Any axiom of $\ZFC(Y)$
has some logical complexity $\Sigma_n$ in the forcing
language, and by the refined forcing facts mentioned above
there is some $k_n$ sufficiently large so that the forcing
lemmas concerning $\Sigma_n$ work as expected for all
$\Sigma_{k_n}(\Q_{k_n})$ generic filters of $\Q_{k_n}$.  By
construction $Y$ has the required level of genericity for
$\Q_{k_n}$, and so $\<M,{\in},Y>$ satisfies every axiom of
$\ZFC(Y)$, as desired.  As we have observed, each $A_n$ is
definable from $Y$ and $\Q_n$.  The partial order
$\Q_{n+1}$ is definable from $\Q_n$ and $A_n$, and it
follows inductively that each $A_n$ and each $\Q_n$ are
definable in $\<M,{\in},Y>$.  Thus every class in $S$ is
definable from $Y$, and so we have produced the desired
principal model $\<M,{\in},Y>$ extending $\calM$. In
particular, every set and class of this model is definable
using only the parameter $Y$ over $M$.
\end{proof}

Theorem \ref{Theorem.NGBC} now follows from theorems
\ref{Theorem.PrincipalGB} and
\ref{Theorem.FriedmanPrincipal}, since any countable model
of \GBC\ can first be made principal by theorem
\ref{Theorem.FriedmanPrincipal} and then pointwise
definable by theorem \ref{Theorem.PrincipalGB}. In fact,
the Friedman/Kossak/Schmerl theorem (theorem
\ref{Theorem.FriedmanPrincipal}) can also serve as a
replacement for Simpson's theorem \ref{Theorem.Simpson} in
the proof of theorem \ref{Theorem.PDForcingExtension},
since as we mentioned every set and class of $\calM$ is
definable in the structure $\<M,\in,Y>$, which was the
whole point of theorem \ref{Theorem.Simpson}.

Finally, we remark that the proof we have just given of
theorem \ref{Theorem.NGBC} allows us to preserve any of the
usual large cardinals from the ground model to the
pointwise definable extension. This is because the
application of theorem \ref{Theorem.FriedmanPrincipal} does
not add sets, and so preserves all large cardinal
properties, and the remaining forcing of theorem
\ref{Theorem.PrincipalGB} is the very nice, progressively
closed Easton-support forcing iteration to code the class
$Y$ and then force $V=\HOD$. As we mentioned after theorem
\ref{Theorem.PDForcingExtension}, this can be easily
arranged to accommodate any of the usual large cardinal
notions. Thus, any countable model of \GBC\ with any of the
usual large cardinals has an extension to a
pointwise-definable model of \ZFC, in which every set and
class is definable without parameters, and in which those
large cardinals are preserved.

\nocite{FuchsHamkinsReitz:Set-theoreticGeology,Reitz2006:Dissertation}
\bibliographystyle{alpha}
\bibliography{MathBiblio,HamkinsBiblio,ExtraBib}

\end{document}